\documentclass[a4paper,12pt]{article}         
\usepackage{amsmath,amsfonts,amsthm,amssymb}  
\usepackage[utf8]{inputenc} 
\usepackage[T1]{fontenc}    
									   

\usepackage{graphicx,caption}            
\usepackage{color}                       
\usepackage[hidelinks]{hyperref}                    
\hypersetup{breaklinks=true}

\usepackage{breakurl}
\usepackage{grffile}
\usepackage{makeidx}                  
\usepackage{acronym}
\usepackage{mathrsfs}
\usepackage{float}
\usepackage{vruler}                    
\usepackage{palatino, url, multicol}
\usepackage{setspace}                  
\usepackage{txfonts}
\usepackage{booktabs}
\usepackage{bm}

\usepackage{tikz}
\usepackage{tikz-3dplot}
\usetikzlibrary{shapes.geometric, arrows, calc, 3d}

\usepackage[numbers,sort&compress]{natbib}   
\usepackage{import}

\usepackage{caption}
\usepackage{subcaption}
\usepackage{listings}
\usepackage{comment}
\usepackage[title]{appendix}

\usepackage{dcolumn}
\usepackage{mathtools}
\usepackage{xcolor}

\newtheorem{assumption}{Assumption}[section]


\usepackage{array}
\usepackage{ragged2e}
\newcolumntype{P}[1]{>{\RaggedRight\arraybackslash}p{#1}}

\parindent 1cm
\parskip 0.2cm
\topmargin 0.2cm
\oddsidemargin 1cm
\evensidemargin 0.5cm
\textwidth 15cm
\textheight 21cm

\newtheorem{theorem}{Theorem}[section]
\newtheorem{proposition}[theorem]{Proposition}
\newtheorem{corollary}[theorem]{Corollary}

\newtheorem{remark}[theorem]{Remark}
\newtheorem{definition}[theorem]{Definition}
\newtheorem{example}[theorem]{Example}
\newtheorem{principle}[theorem]{Principle}
\newcommand{\dd}{\,\mathrm{d}}

\def\R{\mathbb{R}}

\def\H{\mathcal{H}}

\newcommand{\inner}[2]{\langle #1, #2 \rangle}
\newcommand{\expval}[1]{\langle #1 \rangle}

\newcommand{\op}{\text{op}}

\restylefloat{figure}

\begin{document}

\title{Polynomial Eigenfunctions and Matrix Lyapunov Equations from Energy Balance Integrals}
\author{Netzer Moriya}
\date{}
\maketitle

\begin{abstract}
We establish a unified theoretical framework that connects classical orthogonal polynomial systems to matrix Lyapunov equations through the fundamental physics of energy dissipation in stochastic dynamical systems. Starting from the energy balance principle in infinite-dimensional Hilbert spaces, we derive a master integral representation that naturally encompasses both spectral geometry and covariance dynamics. The theory reveals that established orthogonal polynomials (Zernike, Hermite, spherical harmonics) and matrix Lyapunov equations are dual manifestations of the same underlying energy dissipation structure. We provide rigorous mathematical foundations showing how finite-dimensional projections of infinite-dimensional energy integrals reproduce classical matrix equations, with specific structure determined by the symmetries of noise processes. The framework demonstrates that adding uniform dissipation to classical differential operators preserves their polynomial eigenfunction structure while ensuring the energy balance conditions required for physical consistency.
\end{abstract}

\section{Introduction: Physical Foundations of Mathematical Unity}\label{sec:intro}

The mathematical structures underlying physical phenomena often reveal unexpected connections across seemingly disparate domains. Classical orthogonal polynomial systems—the Zernike polynomials of optics \citep{BornWolf,Mahajan1994}, Hermite polynomials of quantum mechanics, spherical harmonics of electromagnetism—arise naturally as eigenfunctions of differential operators governing wave propagation, diffusion, and other fundamental processes. Simultaneously, matrix Lyapunov equations characterize covariance evolution in stochastic systems, forming the backbone of modern control theory and statistical mechanics.

This work establishes that these mathematical structures are not merely analogous but are manifestations of a single underlying physical principle: the balance between energy injection and energy dissipation in stochastic dynamical systems. We develop a unified theoretical framework based on energy dissipation integrals that naturally encompasses both spectral geometry and covariance dynamics.

Our approach is fundamentally physical. Rather than beginning with abstract operator equations, we start from the principle that steady-state energy distributions in dissipative systems arise from the balance between stochastic energy input and deterministic energy dissipation. This physical foundation leads naturally to integral representations that automatically ensure mathematical consistency, positive definiteness, and convergence—properties that can be problematic when approached purely algebraically.

The theory reveals that the connection between orthogonal polynomials and Lyapunov equations is not accidental but reflects the deep geometric structure of energy dissipation in infinite-dimensional spaces. Different polynomial systems correspond to different geometries of the underlying physical domain, while different Lyapunov structures correspond to different symmetries of the noise processes driving the system.

A key insight is that classical differential operators can be modified with uniform dissipation terms to ensure all eigenvalues are negative (as required for energy balance) while preserving their fundamental polynomial eigenfunction structure. This allows the energy framework to connect directly to established mathematical structures rather than requiring new polynomial systems.

\section{The Master Energy Dissipation Framework}\label{sec:master}

\subsection{Physical setup and energy balance}

Consider a general stochastic dynamical system evolving on a Hilbert space $\H$ with inner product $\inner{\cdot}{\cdot}$ and induced norm $\|\cdot\|$. The system is governed by the stochastic evolution equation:

\begin{equation}\label{eq:evolution}
\frac{\dd u}{\dd t} = \mathcal{L}u + \xi(t),
\end{equation}

where $\mathcal{L}: \mathcal{D}(\mathcal{L}) \subset \H \to \H$ is a densely defined linear operator representing the deterministic dynamics, and $\xi(t)$ is a stochastic forcing term representing environmental fluctuations.

\begin{assumption}[Self-adjoint dissipative structure]\label{ass:selfadjoint}
The operator $\mathcal{L}$ is self-adjoint and dissipative on $\H$:
\begin{enumerate}
\item \textbf{Self-adjointness}: $\mathcal{L} = \mathcal{L}^*$
\item \textbf{Dissipativity}: $\inner{\phi}{\mathcal{L}\phi} \leq -\gamma \|\phi\|^2$ for some $\gamma > 0$ and all $\phi \in \mathcal{D}(\mathcal{L})$
\item \textbf{Spectral completeness}: $\mathcal{L}$ has a complete orthonormal system of eigenfunctions $\{\phi_k\}_{k=1}^{\infty}$ with real eigenvalues $\{\lambda_k\}_{k=1}^{\infty}$:
\begin{equation}
\mathcal{L}\phi_k = \lambda_k \phi_k, \quad \inner{\phi_i}{\phi_j} = \delta_{ij}, \quad \lambda_k < 0
\end{equation}
\end{enumerate}
\textbf{Consequence of self-adjointness}: Since $\mathcal{L} = \mathcal{L}^*$, all eigenvalues are real ($\lambda_k \in \mathbb{R}$), and $\mathcal{L}^*\phi_k = \mathcal{L}\phi_k = \lambda_k \phi_k$. Therefore, $\lambda_k^* = \lambda_k$ throughout this work.
\end{assumption}

\begin{assumption}[Stochastic energy injection]\label{ass:noise}
The noise $\xi(t)$ is characterized by its covariance structure through a positive operator $Q: \H \to \H$:
\begin{equation}
\expval{\inner{\phi}{\xi(t)}\inner{\psi}{\xi(s)}} = \inner{\phi}{Q\psi}\delta(t-s)
\end{equation}
for all $\phi, \psi \in \H$. The operator $Q$ represents the spatial and temporal structure of energy injection.
\end{assumption}

\subsection{The energy dissipation integral}

The fundamental physical principle governing steady-state energy distributions is the balance between stochastic energy injection and deterministic energy dissipation. This balance is encoded in the following integral representation:

\begin{definition}[Energy covariance operator]\label{def:energy_covariance}
The steady-state energy covariance operator $P: \H \to \H$ is defined by the energy dissipation integral:
\begin{equation}\label{eq:energy_integral}
\inner{\phi}{P\psi} = \int_0^{\infty} \inner{\phi}{e^{\mathcal{L}t} Q e^{\mathcal{L}t} \psi} \dd t
\end{equation}
for all $\phi, \psi \in \H$.
\end{definition}

\begin{theorem}[Physical interpretation of the energy integral]\label{thm:physical_meaning}
The integral \eqref{eq:energy_integral} represents the cumulative energy correlation between modes $\phi$ and $\psi$ arising from all past noise inputs, weighted by the exponential decay due to dissipation.
\end{theorem}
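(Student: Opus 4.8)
The plan is to show that the operator $P$ defined by the integral \eqref{eq:energy_integral} is precisely the steady-state second-moment (covariance) operator of the process $u(t)$ generated by \eqref{eq:evolution}, so that the purely analytic object of Definition~\ref{def:energy_covariance} acquires exactly the stated physical reading. First I would write the mild (variation-of-constants) solution of \eqref{eq:evolution} with initial datum $u(0)$,
\begin{equation*}
u(t) = e^{\mathcal{L}t}u(0) + \int_0^t e^{\mathcal{L}(t-s)}\xi(s)\dd s,
\end{equation*}
which is well defined because Assumption~\ref{ass:selfadjoint} makes $\mathcal{L}$ the generator of the self-adjoint contraction semigroup $\{e^{\mathcal{L}t}\}_{t\geq 0}$. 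Pairing against test modes $\phi,\psi\in\H$ and using self-adjointness of $e^{\mathcal{L}t}$ to move the semigroup onto the test functions expresses $\inner{\phi}{u(t)}$ as a deterministic term plus a stochastic convolution $\int_0^t \inner{e^{\mathcal{L}(t-s)}\phi}{\xi(s)}\dd s$.

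The core computation is the second moment. Forming $\expval{\inner{\phi}{u(t)}\inner{\psi}{u(t)}}$, the deterministic/stochastic cross terms vanish under the zero-mean assumption on $\xi$, and the purely deterministic term is bounded by $\expval{\norm{u(0)}^2}\,\norm[\op]{e^{\mathcal{L}t}}^2$. I would then evaluate the double stochastic integral using the white-noise covariance of Assumption~\ref{ass:noise}:
\begin{equation*}
\expval{\inner{e^{\mathcal{L}(t-s)}\phi}{\xi(s)}\inner{e^{\mathcal{L}(t-s')}\psi}{\xi(s')}} = \inner{e^{\mathcal{L}(t-s)}\phi}{Q\,e^{\mathcal{L}(t-s')}\psi}\,\delta(s-s'),
\end{equation*}
so that integrating out the delta collapses the double integral to $\int_0^t \inner{e^{\mathcal{L}(t-s)}\phi}{Q\,e^{\mathcal{L}(t-s)}\psi}\dd s$. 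The substitution $\tau=t-s$, followed by a second use of self-adjointness to return the semigroup factors to the right-hand slot, yields $\int_0^t \inner{\phi}{e^{\mathcal{L}\tau}Q\,e^{\mathcal{L}\tau}\psi}\dd\tau$, which is exactly the finite-time truncation of \eqref{eq:energy_integral}.

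It then remains to send $t\to\infty$. Dissipativity gives the spectral bound $\norm[\op]{e^{\mathcal{L}t}}\leq e^{-\gamma t}$, so the deterministic transient decays and the integrand obeys $\abs{\inner{\phi}{e^{\mathcal{L}\tau}Q\,e^{\mathcal{L}\tau}\psi}}\leq \norm[\op]{Q}\,\norm{\phi}\,\norm{\psi}\,e^{-2\gamma\tau}$; expanding in the eigenbasis $\{\phi_k\}$ makes this explicit through the factor $e^{(\lambda_i+\lambda_j)\tau}$ with $\lambda_i+\lambda_j\leq -2\gamma<0$, guaranteeing absolute convergence and justifying the interchange of limit and integral. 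The limit is therefore $\inner{\phi}{P\psi}$, and reading the converged expression backwards gives the interpretation: the variable $\tau$ indexes noise injected $\tau$ units in the past, the central $Q$ encodes its instantaneous modal correlation, the flanking semigroup factors $e^{\mathcal{L}\tau}$ transport and exponentially damp that contribution, and the integral over $\tau\in[0,\infty)$ sums over all past injections---precisely the claimed cumulative, dissipation-weighted energy correlation between $\phi$ and $\psi$.

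The step I expect to be the main obstacle is making the white-noise manipulation rigorous rather than formal. The forcing $\xi(t)$ with a Dirac-delta temporal covariance is a distribution, so the stochastic convolution must be interpreted as an It\^o integral against a (cylindrical) Wiener process, and the heuristic $\delta(s-s')$ collapse is properly the It\^o isometry; controlling this in the infinite-dimensional setting requires integrability of the noise, e.g.\ that $Q$ be trace class (or that $e^{\mathcal{L}t}Q\,e^{\mathcal{L}t}$ be trace class for $t>0$), so that the stochastic convolution takes values in $\H$ with finite moments. Handling this integrability, together with the joint exchange of expectation, time integration, and the $t\to\infty$ limit, is where the genuine analytic work lies; the algebraic reduction to \eqref{eq:energy_integral} is otherwise routine.
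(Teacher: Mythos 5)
Your proposal is correct, but it takes a substantially different — and considerably more substantive — route than the paper. The paper's own ``proof'' of Theorem~\ref{thm:physical_meaning} is a two-sentence narration: it simply reads off the integrand, asserting that $e^{\mathcal{L}t}Qe^{\mathcal{L}t}$ is the correlation induced by noise injected $t$ units in the past, evolved forward under the dissipative dynamics, and that the integral sums these contributions with exponential suppression. No computation is performed and nothing is derived from the evolution equation \eqref{eq:evolution}. You, by contrast, actually \emph{establish} the interpretation: you write the mild solution of \eqref{eq:evolution}, compute $\expval{\inner{\phi}{u(t)}\inner{\psi}{u(t)}}$ via the white-noise covariance of Assumption~\ref{ass:noise}, collapse the double integral with the delta, substitute $\tau = t-s$, and show that the finite-time covariance is exactly the truncation of \eqref{eq:energy_integral}, converging to $\inner{\phi}{P\psi}$ as $t\to\infty$ by the contraction bound $\|e^{\mathcal{L}t}\|\leq e^{-\gamma t}$. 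This is the standard Da Prato--Zabczyk computation for the stationary covariance of an infinite-dimensional Ornstein--Uhlenbeck process, and it turns an assertion into a theorem: it proves that $P$ \emph{is} the steady-state covariance of the physical process, which is the only way the phrase ``cumulative energy correlation arising from all past noise inputs'' acquires precise meaning. Your closing caveat is also well placed: the paper never addresses the fact that $\xi$ with Dirac temporal covariance must be realized as a cylindrical Wiener process and the delta collapse as the It\^o isometry, nor the trace-class condition on $Q$ (or on $e^{\mathcal{L}t}Qe^{\mathcal{L}t}$) needed for the stochastic convolution to live in $\H$ with finite second moments. In short, nothing in your argument is wrong; it simply does the work the paper's proof gestures at, at the cost of invoking stochastic-analysis machinery the paper chose to leave implicit.
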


\begin{proof}
Each term $e^{\mathcal{L}t} Q e^{\mathcal{L}t}$ represents the correlation induced by noise at time $-t$ in the past, evolved forward to the present time under the dissipative dynamics. The integral sums all such contributions, with older contributions exponentially suppressed due to the dissipative nature of $\mathcal{L}$.
\end{proof}

\subsection{Semigroup theory and rigorous convergence}\label{sec:semigroup}

The convergence of the energy dissipation integral \eqref{eq:energy_integral} requires establishing that $\mathcal{L}$ generates a strongly continuous semigroup with appropriate decay properties.

\begin{assumption}[Semigroup generation]\label{ass:semigroup}
The operator $\mathcal{L}$ satisfying Assumption \ref{ass:selfadjoint} generates a strongly continuous semigroup $\{e^{\mathcal{L}t}\}_{t \geq 0}$ of contractions on $\H$:
\begin{equation}
\|e^{\mathcal{L}t}\| \leq e^{-\gamma t} \quad \text{for all } t \geq 0
\end{equation}
where $\gamma > 0$ is the dissipation constant from Assumption \ref{ass:selfadjoint}.
\end{assumption}

\begin{theorem}[Semigroup bounds for self-adjoint dissipative operators]\label{thm:semigroup_bounds}
Under Assumption \ref{ass:selfadjoint}, the semigroup satisfies:
\begin{equation}
\|e^{\mathcal{L}t}\phi\|^2 = \sum_{k=1}^{\infty} e^{2\lambda_k t} |\inner{\phi_k}{\phi}|^2 \leq e^{2\lambda_1 t}\|\phi\|^2
\end{equation}
where $\lambda_1 = \max_k \lambda_k \leq -\gamma < 0$.
\end{theorem}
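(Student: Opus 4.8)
The plan is to prove the stated identity and bound directly from the spectral theorem for the self-adjoint operator $\mathcal{L}$, exploiting the complete orthonormal eigensystem $\{\phi_k\}$ guaranteed by Assumption \ref{ass:selfadjoint}. The strategy is first to represent an arbitrary $\phi \in \H$ in the eigenbasis, then to define the action of the semigroup mode-by-mode through the functional calculus, and finally to read off the squared norm via Parseval's identity and dominate each exponential factor by its largest value.

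Concretely, first I would write $\phi = \sum_{k=1}^{\infty} c_k \phi_k$ with $c_k = \inner{\phi_k}{\phi}$; completeness and orthonormality give Parseval's identity $\|\phi\|^2 = \sum_k |c_k|^2 < \infty$. Next, since each $\phi_k$ is an eigenfunction with $\mathcal{L}\phi_k = \lambda_k \phi_k$, the functional calculus yields $e^{\mathcal{L}t}\phi_k = e^{\lambda_k t}\phi_k$, so that $e^{\mathcal{L}t}\phi = \sum_k c_k e^{\lambda_k t}\phi_k$. Because $\lambda_k < 0$ and $t \geq 0$, every factor satisfies $|e^{\lambda_k t}| \leq 1$, so this series is dominated termwise by $\sum_k |c_k|^2$ and hence converges in $\H$; this simultaneously shows the series defining $e^{\mathcal{L}t}\phi$ is well-posed. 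Applying orthonormality of $\{\phi_k\}$ to this expansion gives the claimed identity $\|e^{\mathcal{L}t}\phi\|^2 = \sum_k e^{2\lambda_k t}|c_k|^2$. For the inequality, set $\lambda_1 = \sup_k \lambda_k$; then $e^{2\lambda_k t} \leq e^{2\lambda_1 t}$ for all $k$ and all $t \geq 0$, whence $\|e^{\mathcal{L}t}\phi\|^2 \leq e^{2\lambda_1 t}\sum_k |c_k|^2 = e^{2\lambda_1 t}\|\phi\|^2$. Finally, the dissipativity bound in Assumption \ref{ass:selfadjoint} applied to each unit eigenvector gives $\lambda_k = \inner{\phi_k}{\mathcal{L}\phi_k} \leq -\gamma$, so $\lambda_1 \leq -\gamma < 0$.

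The one place requiring genuine care — rather than routine computation — is the rigorous justification of the functional-calculus step $e^{\mathcal{L}t}\phi_k = e^{\lambda_k t}\phi_k$ together with the interchange of the semigroup with the infinite eigenfunction sum, since $\mathcal{L}$ is unbounded. I expect to handle this by taking the spectral-series formula $e^{\mathcal{L}t}\phi := \sum_k e^{\lambda_k t}\inner{\phi_k}{\phi}\phi_k$ as the definition of the operator supplied by the spectral theorem for self-adjoint operators, and then verifying that this family is strongly continuous, satisfies the semigroup law, and has $\mathcal{L}$ as its generator — which is precisely the content linking it to Assumption \ref{ass:semigroup}. A secondary subtlety is that $\sup_k \lambda_k$ need not be attained if the eigenvalues accumulate; this is harmless, since the bound $e^{2\lambda_k t} \leq e^{2\lambda_1 t}$ only requires $\lambda_1$ to be an upper bound for the $\lambda_k$, and dissipativity already guarantees such a bound at or below $-\gamma$ whether or not the maximum is achieved.
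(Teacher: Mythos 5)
Your proposal is correct and follows essentially the same route as the paper's proof: expand $\phi$ in the orthonormal eigenbasis, use $e^{\mathcal{L}t}\phi_k = e^{\lambda_k t}\phi_k$ to get $\|e^{\mathcal{L}t}\phi\|^2 = \sum_k e^{2\lambda_k t}|\inner{\phi_k}{\phi}|^2$ by Parseval, and bound each factor by $e^{2\lambda_1 t}$. Your added care about defining the semigroup via the spectral series, the possibility that $\sup_k\lambda_k$ is not attained, and deriving $\lambda_k \leq -\gamma$ from dissipativity tested on unit eigenvectors goes slightly beyond what the paper writes down, but does not change the argument.
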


\begin{proof}
Since $\mathcal{L}$ is self-adjoint with spectral decomposition $\mathcal{L} = \sum_{k=1}^{\infty} \lambda_k \phi_k \otimes \phi_k$, we have:
\begin{align}
e^{\mathcal{L}t}\phi &= \sum_{k=1}^{\infty} e^{\lambda_k t} \inner{\phi_k}{\phi} \phi_k\\
\|e^{\mathcal{L}t}\phi\|^2 &= \sum_{k=1}^{\infty} e^{2\lambda_k t} |\inner{\phi_k}{\phi}|^2 \leq e^{2\lambda_1 t} \sum_{k=1}^{\infty} |\inner{\phi_k}{\phi}|^2 = e^{2\lambda_1 t}\|\phi\|^2
\end{align}
where the inequality uses $\lambda_k \leq \lambda_1$ for all $k$.
\end{proof}

\begin{theorem}[Rigorous convergence of energy integral]\label{thm:rigorous_convergence}
Under Assumptions \ref{ass:selfadjoint}-\ref{ass:semigroup}, the energy dissipation integral converges absolutely:
\begin{equation}
\left|\int_0^{\infty} \inner{\phi}{e^{\mathcal{L}t} Q e^{\mathcal{L}t} \psi} \dd t\right| \leq \frac{\|\phi\|\|\psi\|\|Q\|}{2\gamma}
\end{equation}
\end{theorem}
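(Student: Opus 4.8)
The plan is to dominate the scalar integrand pointwise in $t$ by a decaying exponential and then integrate that dominating function. First I would pass the absolute value inside the integral via the triangle inequality for the Lebesgue integral,
\[
\left|\int_0^{\infty} \inner{\phi}{e^{\mathcal{L}t} Q e^{\mathcal{L}t} \psi} \dd t\right| \le \int_0^{\infty} \left|\inner{\phi}{e^{\mathcal{L}t} Q e^{\mathcal{L}t} \psi}\right| \dd t,
\]
which is legitimate once I confirm that $t \mapsto \inner{\phi}{e^{\mathcal{L}t} Q e^{\mathcal{L}t} \psi}$ is measurable. This follows from the strong continuity of the semigroup guaranteed by Assumption \ref{ass:semigroup}: the map $t \mapsto e^{\mathcal{L}t}\psi$ is continuous into $\H$, and composing with the bounded operator $Q$, the second factor $e^{\mathcal{L}t}$, and the (jointly continuous) inner product keeps the scalar function continuous, hence measurable, in $t$.

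Next, for the pointwise estimate I would apply Cauchy--Schwarz followed by submultiplicativity of the operator norm:
\[
\left|\inner{\phi}{e^{\mathcal{L}t} Q e^{\mathcal{L}t} \psi}\right| \le \|\phi\|\,\|e^{\mathcal{L}t} Q e^{\mathcal{L}t} \psi\| \le \|\phi\|\,\|e^{\mathcal{L}t}\|\,\|Q\|\,\|e^{\mathcal{L}t}\|\,\|\psi\|.
\]
Here the appearance of $\|Q\|$ presupposes that $Q$ is bounded, which I would note explicitly. The contraction estimate then enters: either directly from Assumption \ref{ass:semigroup}, or from its derivation in Theorem \ref{thm:semigroup_bounds}, which gives $\|e^{\mathcal{L}t}\| \le e^{\lambda_1 t} \le e^{-\gamma t}$ in the full operator norm. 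Consequently $\|e^{\mathcal{L}t}\|^2 \le e^{-2\gamma t}$, and the integrand is dominated by the integrable function $\|\phi\|\,\|\psi\|\,\|Q\|\,e^{-2\gamma t}$.

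Finally I would integrate the dominating function, using $\int_0^{\infty} e^{-2\gamma t}\dd t = \frac{1}{2\gamma}$, finite precisely because $\gamma > 0$, to obtain simultaneously the absolute convergence and the claimed bound:
\[
\left|\int_0^{\infty} \inner{\phi}{e^{\mathcal{L}t} Q e^{\mathcal{L}t} \psi} \dd t\right| \le \|\phi\|\,\|\psi\|\,\|Q\|\int_0^{\infty} e^{-2\gamma t}\dd t = \frac{\|\phi\|\,\|\psi\|\,\|Q\|}{2\gamma}.
\]

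The algebra in the chain above is routine, so I do not expect a serious obstacle; the only points requiring genuine care are the two ``soft'' steps. The first is justifying measurability (equivalently, continuity) of the scalar integrand so that the triangle inequality for integrals applies — handled by strong continuity. The second is confirming that the decay bound $\|e^{\mathcal{L}t}\| \le e^{-\gamma t}$ holds in the full operator norm rather than merely on the eigenbasis; this is exactly the content provided by Theorem \ref{thm:semigroup_bounds} together with $\lambda_1 \le -\gamma$, so it is already in hand. If the main obstacle is anywhere, it is in making sure these hypotheses (boundedness of $Q$, strong continuity, and the uniform operator-norm contraction) are all invoked cleanly rather than taken for granted.
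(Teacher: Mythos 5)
Your proposal is correct and follows essentially the same route as the paper's proof: Cauchy--Schwarz plus submultiplicativity of the operator norm, the contraction bound $\|e^{\mathcal{L}t}\| \leq e^{-\gamma t}$, and integration of $e^{-2\gamma t}$. The extra care you take over measurability of the integrand and boundedness of $Q$ is a welcome refinement but does not change the argument.
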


\begin{proof}
For $\phi, \psi \in \H$:
\begin{align}
\left|\int_0^{\infty} \inner{\phi}{e^{\mathcal{L}t} Q e^{\mathcal{L}t} \psi} \dd t\right| 
&\leq \int_0^{\infty} \|\phi\| \|e^{\mathcal{L}t} Q e^{\mathcal{L}t}\| \|\psi\| \dd t\\
&\leq \|\phi\|\|\psi\|\|Q\| \int_0^{\infty} \|e^{\mathcal{L}t}\|^2 \dd t\\
&\leq \|\phi\|\|\psi\|\|Q\| \int_0^{\infty} e^{-2\gamma t} \dd t = \frac{\|\phi\|\|\psi\|\|Q\|}{2\gamma}
\end{align}
\end{proof}

\begin{theorem}[Fundamental properties of the energy covariance]\label{thm:covariance_properties}
The energy covariance operator $P$ satisfies:
\begin{enumerate}
\item \textbf{Positive definiteness}: $\inner{\phi}{P\phi} \geq 0$ for all $\phi \in \H$
\item \textbf{Self-adjointness}: $P^* = P$
\item \textbf{Energy balance equation}: $\mathcal{L}P + P\mathcal{L} = -Q$
\item \textbf{Convergence}: The integral \eqref{eq:energy_integral} converges absolutely
\end{enumerate}
\end{theorem}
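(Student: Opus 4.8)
The plan is to dispatch the four claims in order of increasing difficulty, treating the energy balance identity (Property~3) as the main event. Property~4 requires no new work: the bound $\abs{\int_0^{\infty}\inner{\phi}{e^{\mathcal{L}t}Qe^{\mathcal{L}t}\psi}\dd t}\le \norm{\phi}\,\norm{\psi}\,\norm{Q}/(2\gamma)$ is exactly Theorem~\ref{thm:rigorous_convergence}. This bound also shows that the right-hand side of \eqref{eq:energy_integral} is a bounded sesquilinear form, so $P$ exists as a well-defined bounded operator by the Riesz representation theorem, and the integral converges absolutely as claimed.

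For Property~2 I would note that for each fixed $t$ the integrand is self-adjoint: since $\mathcal{L}=\mathcal{L}^*$ gives $(e^{\mathcal{L}t})^*=e^{\mathcal{L}t}$, and $Q$ being a positive operator is self-adjoint ($Q=Q^*$), we have $(e^{\mathcal{L}t}Qe^{\mathcal{L}t})^*=e^{\mathcal{L}t}Qe^{\mathcal{L}t}$. Hence $\inner{\phi}{P\psi}=\int_0^{\infty}\inner{e^{\mathcal{L}t}Qe^{\mathcal{L}t}\phi}{\psi}\dd t=\inner{P\phi}{\psi}$, i.e.\ $P=P^*$. Property~1 then follows by setting $\phi=\psi$ and moving one exponential across the inner product via self-adjointness: $\inner{\phi}{P\phi}=\int_0^{\infty}\inner{e^{\mathcal{L}t}\phi}{Q\,e^{\mathcal{L}t}\phi}\dd t\ge 0$, since each integrand is nonnegative by positivity of $Q$.

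The substantive step is Property~3, and the idea is the fundamental theorem of calculus applied to the operator kernel $F(t)=e^{\mathcal{L}t}Qe^{\mathcal{L}t}$, whose formal derivative is $\tfrac{\dd}{\dd t}F(t)=\mathcal{L}F(t)+F(t)\mathcal{L}$. Because $\mathcal{L}$ is unbounded, I would read the identity $\mathcal{L}P+P\mathcal{L}=-Q$ in weak form as the bilinear statement $\inner{\mathcal{L}\phi}{P\psi}+\inner{\phi}{P\mathcal{L}\psi}=-\inner{\phi}{Q\psi}$ for all $\phi,\psi\in\mathcal{D}(\mathcal{L})$. Standard semigroup theory gives that for such vectors the orbit $t\mapsto e^{\mathcal{L}t}\psi$ is strongly differentiable with $\tfrac{\dd}{\dd t}e^{\mathcal{L}t}\psi=\mathcal{L}e^{\mathcal{L}t}\psi=e^{\mathcal{L}t}\mathcal{L}\psi$; combined with boundedness of $Q$ and self-adjointness of $e^{\mathcal{L}t}$, the scalar integrand collapses to a total derivative, $\inner{\mathcal{L}\phi}{e^{\mathcal{L}t}Qe^{\mathcal{L}t}\psi}+\inner{\phi}{e^{\mathcal{L}t}Qe^{\mathcal{L}t}\mathcal{L}\psi}=\tfrac{\dd}{\dd t}\inner{\phi}{e^{\mathcal{L}t}Qe^{\mathcal{L}t}\psi}$. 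Integrating over $[0,\infty)$ evaluates the boundary terms: at $t=0$ the kernel is $Q$, and the contribution at $t=\infty$ vanishes since $\abs{\inner{\phi}{e^{\mathcal{L}t}Qe^{\mathcal{L}t}\psi}}\le \norm{\phi}\,\norm{\psi}\,\norm{Q}\,e^{-2\gamma t}\to 0$ by Theorem~\ref{thm:semigroup_bounds}. This yields $\inner{\mathcal{L}\phi}{P\psi}+\inner{\phi}{P\mathcal{L}\psi}=-\inner{\phi}{Q\psi}$, the Lyapunov identity on the dense domain $\mathcal{D}(\mathcal{L})$.

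The main obstacle I anticipate is the rigorous handling of the unbounded generator. Specifically, I must justify the product rule for $\tfrac{\dd}{\dd t}\inner{\phi}{e^{\mathcal{L}t}Qe^{\mathcal{L}t}\psi}$ (needing strong differentiability of both orbits $e^{\mathcal{L}t}\phi$ and $e^{\mathcal{L}t}\psi$ together with boundedness of $Q$ to control the cross terms) and confirm that the weak bilinear identity is the correct reading of "$\mathcal{L}P+P\mathcal{L}=-Q$"; the strong form $\inner{\phi}{\mathcal{L}P\psi}$ would additionally require $P\psi\in\mathcal{D}(\mathcal{L})$, which is a domain question better sidestepped by working entirely against test vectors. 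The advantage of this weak formulation is that all operators act only on fixed vectors and the object being integrated is scalar, so the $e^{-2\gamma t}$ majorant legitimizes differentiation under, and convergence of, the integral via dominated convergence, avoiding any interchange of the unbounded $\mathcal{L}$ with the integral.
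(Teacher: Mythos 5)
Your proposal is correct and follows essentially the same route as the paper's proof: positivity and self-adjointness by moving $e^{\mathcal{L}t}$ and $Q$ across the inner product, convergence by citing Theorem~\ref{thm:rigorous_convergence}, and the Lyapunov identity by recognizing the integrand as the total derivative of $e^{\mathcal{L}t}Qe^{\mathcal{L}t}$ and applying the fundamental theorem of calculus with the boundary term at infinity killed by the dissipative decay. Your added care about reading $\mathcal{L}P+P\mathcal{L}=-Q$ weakly against test vectors in $\mathcal{D}(\mathcal{L})$ is a worthwhile refinement of the paper's argument, which passes over the domain question silently, but it is not a different method.
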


\begin{proof}
\textit{Positive definiteness}: For any $\phi \in \H$,
\begin{align}
\inner{\phi}{P\phi} &= \int_0^{\infty} \inner{\phi}{e^{\mathcal{L}t} Q e^{\mathcal{L}t} \phi} \dd t\\
&= \int_0^{\infty} \inner{e^{\mathcal{L}t}\phi}{Q e^{\mathcal{L}t}\phi} \dd t \geq 0
\end{align}
since $Q$ is positive.

\textit{Self-adjointness}: Direct from the symmetry of the integral representation and the fact that $Q^* = Q$.

\textit{Energy balance}: Starting with the definition:
\begin{equation}
\inner{\phi}{P\psi} = \int_0^{\infty} \inner{\phi}{e^{\mathcal{L}\tau} Q e^{\mathcal{L}\tau} \psi} \dd\tau
\end{equation}
we compute:
\begin{align}
\inner{\phi}{(\mathcal{L}P + P\mathcal{L})\psi} &= \inner{\mathcal{L}\phi}{P\psi} + \inner{\phi}{P\mathcal{L}\psi}\\
&= \int_0^{\infty} \left(\inner{\mathcal{L}\phi}{e^{\mathcal{L}\tau} Q e^{\mathcal{L}\tau} \psi} + \inner{\phi}{e^{\mathcal{L}\tau} Q e^{\mathcal{L}\tau} \mathcal{L}\psi}\right) \dd\tau\\
&= \int_0^{\infty} \inner{\phi}{(\mathcal{L}e^{\mathcal{L}\tau} Q e^{\mathcal{L}\tau} + e^{\mathcal{L}\tau} Q e^{\mathcal{L}\tau}\mathcal{L})\psi} \dd\tau
\end{align}
where we used self-adjointness: $\inner{\mathcal{L}\phi}{\chi} = \inner{\phi}{\mathcal{L}\chi}$. Since:
\begin{equation}
\frac{\dd}{\dd\tau}(e^{\mathcal{L}\tau} Q e^{\mathcal{L}\tau}) = \mathcal{L}e^{\mathcal{L}\tau} Q e^{\mathcal{L}\tau} + e^{\mathcal{L}\tau} Q e^{\mathcal{L}\tau}\mathcal{L}
\end{equation}
we obtain:
\begin{align}
\inner{\phi}{(\mathcal{L}P + P\mathcal{L})\psi} &= \int_0^{\infty} \inner{\phi}{\frac{\dd}{\dd\tau}(e^{\mathcal{L}\tau} Q e^{\mathcal{L}\tau})\psi} \dd\tau\\
&= \inner{\phi}{\left[e^{\mathcal{L}\tau} Q e^{\mathcal{L}\tau}\right]_0^{\infty}\psi}\\
&= \lim_{\tau \to \infty} \inner{\phi}{e^{\mathcal{L}\tau} Q e^{\mathcal{L}\tau}\psi} - \inner{\phi}{Q\psi}
\end{align}
Since $\lambda_k < 0$ for all $k$, the limit vanishes, giving $\mathcal{L}P + P\mathcal{L} = -Q$.

\textit{Convergence}: Follows from Theorem \ref{thm:rigorous_convergence}.
\end{proof}

\section{Spectral Reduction and Finite-Dimensional Projections}\label{sec:spectral}

\subsection{Spectral representation of energy dissipation}

Using the spectral decomposition of $\mathcal{L}$, the energy integral admits an explicit spectral representation:

\begin{theorem}[Spectral form of energy covariance]\label{thm:spectral_covariance}
In the eigenbasis $\{\phi_k\}$ of $\mathcal{L}$, the energy covariance has the representation:
\begin{equation}\label{eq:spectral_representation}
P = \sum_{k,l=1}^{\infty} \frac{\inner{\phi_k}{Q\phi_l}}{-(\lambda_k + \lambda_l)} \phi_k \otimes \phi_l
\end{equation}
where the convergence is guaranteed by the dissipative condition $\lambda_k < 0$.
\end{theorem}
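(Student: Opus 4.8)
The plan is to prove the identity at the level of matrix elements in the eigenbasis $\{\phi_k\}$, using the fact that a bounded operator on $\H$ is completely determined by the numbers $\inner{\phi_k}{P\phi_l}$ once $\{\phi_k\}$ is complete and orthonormal. Boundedness of $P$ is already in hand: Theorem~\ref{thm:rigorous_convergence} gives $\abs{\inner{\phi}{P\psi}} \le \norm{\phi}\norm{\psi}\norm{Q}/(2\gamma)$, hence $\norm{P} \le \norm{Q}/(2\gamma) < \infty$. It therefore suffices to compute $\inner{\phi_k}{P\phi_l}$ and then reassemble $P$ from these coefficients through the dyadic (outer-product) expansion.

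For the computation I would insert $\phi = \phi_k$, $\psi = \phi_l$ into the defining integral \eqref{eq:energy_integral}. The crucial simplification is that $\phi_l$ is an eigenvector, so the semigroup acts as a scalar, $e^{\mathcal{L}t}\phi_l = e^{\lambda_l t}\phi_l$ (from the spectral decomposition used in Theorem~\ref{thm:semigroup_bounds}); likewise, since $e^{\mathcal{L}t}$ is self-adjoint, I move the left factor onto $\phi_k$ to obtain $e^{\lambda_k t}\phi_k$. The integrand then collapses to the scalar $e^{(\lambda_k+\lambda_l)t}\inner{\phi_k}{Q\phi_l}$, with no infinite sum left to interchange with the integral. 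Pulling the constant $\inner{\phi_k}{Q\phi_l}$ outside and evaluating the elementary integral
\[
\int_0^{\infty} e^{(\lambda_k+\lambda_l)t}\dd t = \frac{1}{-(\lambda_k+\lambda_l)},
\]
which converges precisely because $\lambda_k+\lambda_l \le 2\lambda_1 < 0$, yields $\inner{\phi_k}{P\phi_l} = \inner{\phi_k}{Q\phi_l}/(-(\lambda_k+\lambda_l))$.

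I would then reconstruct $P$ from its matrix elements via the resolution of the identity $\sum_k \phi_k\otimes\phi_k = I$, writing $P = \sum_{k,l}\inner{\phi_k}{P\phi_l}\,\phi_k\otimes\phi_l$ and substituting the coefficients just computed to arrive at \eqref{eq:spectral_representation}. The arithmetic is immediate; the genuine care lies in the convergence and the sense in which the operator identity holds. Because $-(\lambda_k+\lambda_l)\ge 2\gamma$, every coefficient is bounded by $\abs{\inner{\phi_k}{Q\phi_l}}/(2\gamma)$, so the denominators never threaten divergence and the series is controlled entirely by the matrix elements of the bounded operator $Q$. For general bounded positive $Q$ (not necessarily Hilbert--Schmidt) the cleanest reading is that \eqref{eq:spectral_representation} holds in the weak operator topology, i.e.\ as the family of identities $\inner{\phi}{P\psi}=\sum_{k,l}\inner{\phi}{\phi_k}\inner{\phi_k}{P\phi_l}\inner{\phi_l}{\psi}$ obtained by Parseval expansion of $\phi$ and $\psi$. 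This is the step I expect to require the most care to state precisely, since naive absolute convergence of the double series is not available without extra assumptions on $Q$. An alternative, if uniqueness of the Lyapunov solution is invoked, is to verify directly that the right-hand side of \eqref{eq:spectral_representation} satisfies $\mathcal{L}P+P\mathcal{L}=-Q$ in the eigenbasis, where $(\lambda_k+\lambda_l)\inner{\phi_k}{P\phi_l}=-\inner{\phi_k}{Q\phi_l}$, and to appeal to Theorem~\ref{thm:covariance_properties}.
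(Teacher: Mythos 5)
Your proposal is correct and follows essentially the same route as the paper: insert eigenvectors into the defining integral, use self-adjointness and the scalar action of the semigroup on eigenvectors to reduce the integrand to $e^{(\lambda_k+\lambda_l)t}\inner{\phi_k}{Q\phi_l}$, and evaluate the elementary integral. The paper stops at the matrix-element identity and asserts the dyadic expansion; your added discussion of the sense in which the double series converges (weak operator topology, coefficients bounded via $-(\lambda_k+\lambda_l)\ge 2\gamma$) is a refinement the paper omits, not a different argument.
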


\begin{proof}
Since $\mathcal{L}$ is self-adjoint, $\mathcal{L}^* = \mathcal{L}$, so eigenvalues are real. Expanding in the eigenbasis:
\begin{align}
\inner{\phi_j}{P\phi_m} &= \int_0^{\infty} \inner{\phi_j}{e^{\mathcal{L}t} Q e^{\mathcal{L}t} \phi_m} \dd t\\
&= \int_0^{\infty} e^{(\lambda_j + \lambda_m)t} \inner{\phi_j}{Q\phi_m} \dd t\\
&= \frac{\inner{\phi_j}{Q\phi_m}}{-(\lambda_j + \lambda_m)}
\end{align}
where the integral converges since $\lambda_j + \lambda_m < 0$.
\end{proof}

\subsection{Finite-dimensional energy projections}

Physical systems are often characterized by a finite number of dominant modes. The energy framework naturally accommodates this through orthogonal projections:

\begin{definition}[Finite-dimensional energy projection]\label{def:finite_projection}
Let $\Pi_N$ denote the orthogonal projection onto $\text{span}\{\phi_1, \ldots, \phi_N\}$. The finite-dimensional energy covariance is:
\begin{equation}
P_N = \Pi_N P \Pi_N = \sum_{j,k=1}^N (P_N)_{jk} \phi_j \otimes \phi_k
\end{equation}
where $(P_N)_{jk} = \inner{\phi_j}{P\phi_k}$.
\end{definition}

\begin{theorem}[Matrix representation of finite-dimensional energy]\label{thm:matrix_energy}
The finite-dimensional energy covariance satisfies the matrix equation:
\begin{equation}\label{eq:matrix_lyapunov}
\Lambda P_N + P_N \Lambda = -Q_N
\end{equation}
where $\Lambda = \text{diag}(\lambda_1, \ldots, \lambda_N)$ with $\lambda_k < 0$, and $(Q_N)_{jk} = \inner{\phi_j}{Q\phi_k}$.
\end{theorem}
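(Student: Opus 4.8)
The plan is to compute the entries of $\Lambda P_N + P_N \Lambda$ directly in the eigenbasis and match them against $-Q_N$, leveraging the spectral representation of $P$ already established in Theorem \ref{thm:spectral_covariance}. Since $\Lambda = \mathrm{diag}(\lambda_1,\dots,\lambda_N)$ is diagonal, for any $j,k \le N$ the $(j,k)$ entry simplifies immediately: $(\Lambda P_N)_{jk} = \lambda_j (P_N)_{jk}$ and $(P_N\Lambda)_{jk} = \lambda_k (P_N)_{jk}$, so that $(\Lambda P_N + P_N \Lambda)_{jk} = (\lambda_j+\lambda_k)(P_N)_{jk}$. The first step is therefore just to record this elementary consequence of $\Lambda$ being diagonal.

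The second step invokes Theorem \ref{thm:spectral_covariance}, which gives $(P_N)_{jk} = \inner{\phi_j}{P\phi_k} = \inner{\phi_j}{Q\phi_k}/(-(\lambda_j+\lambda_k))$. Substituting this into the previous expression, the factor $(\lambda_j+\lambda_k)$ cancels against the denominator, leaving $(\Lambda P_N + P_N\Lambda)_{jk} = -\inner{\phi_j}{Q\phi_k} = -(Q_N)_{jk}$ for every $j,k\le N$. Since this holds entrywise, the matrix identity \eqref{eq:matrix_lyapunov} follows. Note the cancellation is licit precisely because $\lambda_j+\lambda_k < 0$ by Assumption \ref{ass:selfadjoint}, so no denominator vanishes.

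An equivalent, more structural route I would also present is to project the operator energy balance equation $\mathcal{L}P + P\mathcal{L} = -Q$ from Theorem \ref{thm:covariance_properties}. Taking matrix elements $\inner{\phi_j}{(\mathcal{L}P+P\mathcal{L})\phi_k}$ and using self-adjointness together with $\mathcal{L}\phi_j = \lambda_j\phi_j$ and $\mathcal{L}\phi_k = \lambda_k\phi_k$ reproduces $(\lambda_j+\lambda_k)(P_N)_{jk} = -(Q_N)_{jk}$ without reference to the closed-form denominator. The conceptual point underlying both routes — and the one place where care is genuinely required — is that the truncation is \emph{exact}: for a generic Galerkin projection the compressed solution need not satisfy the compressed equation, but here $\Pi_N$ projects onto an invariant subspace of $\mathcal{L}$, so that $[\mathcal{L},\Pi_N]=0$ and $\Pi_N\mathcal{L}\Pi_N$ acts as exactly $\Lambda$ on the retained modes. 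This commutation is what prevents off-block coupling terms from appearing, and it is the main fact to verify carefully; everything else is routine algebra.
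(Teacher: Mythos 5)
Your primary argument is exactly the paper's proof: substitute the spectral formula $(P_N)_{jk} = (Q_N)_{jk}/(-(\lambda_j+\lambda_k))$ into $(\Lambda P_N + P_N\Lambda)_{jk} = (\lambda_j+\lambda_k)(P_N)_{jk}$ and cancel, which is licit since $\lambda_j+\lambda_k < 0$. The alternative route via projecting the operator equation $\mathcal{L}P+P\mathcal{L}=-Q$ and your remark that the projection is exact because $\Pi_N$ commutes with $\mathcal{L}$ are correct and worthwhile observations, but the core proof is the same as the paper's.
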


\begin{proof}
From the spectral representation:
\begin{equation}
(P_N)_{jk} = \frac{(Q_N)_{jk}}{-(\lambda_j + \lambda_k)}
\end{equation}
This satisfies:
\begin{equation}
\lambda_j (P_N)_{jk} + (P_N)_{jk} \lambda_k = (\lambda_j + \lambda_k)(P_N)_{jk} = -(Q_N)_{jk}
\end{equation}
which is precisely the matrix Lyapunov equation.
\end{proof}

\begin{remark}[Physical clarity of self-adjoint systems]\label{rem:physical_clarity}
The restriction to self-adjoint operators provides clear physical interpretation:
\begin{enumerate}
\item \textbf{Energy conservation}: Self-adjointness ensures that $\mathcal{L}$ represents conservative dynamics plus dissipation
\item \textbf{Real eigenvalues}: All decay rates $|\lambda_k|$ are real positive numbers with clear physical meaning
\item \textbf{Positive covariances}: $(P_N)_{jj} = (Q_N)_{jj}/(2|\lambda_j|) > 0$ automatically ensures positive variances
\item \textbf{Standard Lyapunov form}: The matrix equation reduces to the classical form
\end{enumerate}
This covers all major physical examples while ensuring mathematical rigor.
\end{remark}

\section{Geometric Realization: Classical Zernike Polynomials}\label{sec:zernike}

\subsection{Physical system on the unit disk}

We now demonstrate how the abstract energy framework connects to established polynomial systems. Consider energy dissipation on the unit disk $\Omega = \{(r,\theta) \in \R^2 : 0 \leq r < 1\}$, relevant to optical systems with circular apertures.

The physical system is governed by:
\begin{equation}\label{eq:disk_evolution}
\frac{\partial u}{\partial t} = -\alpha^2 \Delta u - \gamma u + f(r,\theta,t)
\end{equation}
with Dirichlet boundary conditions $u|_{r=1} = 0$, where $\alpha > 0$ controls the diffusion rate and $\gamma > 0$ provides uniform dissipation to ensure all eigenvalues are negative.

\begin{definition}[Weighted Hilbert space for the disk]\label{def:disk_hilbert}
The natural Hilbert space is $L^2(\Omega, \mu)$ with measure $\dd\mu = r \dd r \dd\theta$ and inner product:
\begin{equation}
\inner{\phi}{\psi}_\mu = \int_0^{2\pi} \int_0^1 \phi(r,\theta) \psi(r,\theta) r \dd r \dd\theta
\end{equation}
\end{definition}

\begin{definition}[Classical Zernike operator domain]\label{def:disk_domain}
The operator $\mathcal{L}u = -\alpha^2 \Delta u - \gamma u$ has domain:
\begin{align}
\mathcal{D}(\mathcal{L}) = \{u \in L^2(\Omega, r\,dr\,d\theta) : &\; u \in H^2(\Omega), \; u|_{r=1} = 0,\\
&\; \Delta u \in L^2(\Omega, r\,dr\,d\theta)\}
\end{align}
where $\Omega = \{(r,\theta) : 0 \leq r < 1, 0 \leq \theta < 2\pi\}$ and $\Delta$ is the standard Laplacian.
\end{definition}

\subsection{Physical interpretation of the dissipative Laplacian}

The operator $\mathcal{L}u = -\alpha^2 \Delta u - \gamma u$ represents:
\begin{itemize}
\item $-\alpha^2 \Delta u$: Standard diffusion/heat conduction on the disk
\item $-\gamma u$: Uniform energy dissipation (e.g., radiative losses, absorption)
\end{itemize}

This model applies to physical systems such as:
\begin{enumerate}
\item \textbf{Optical systems}: Heat diffusion in circular optical elements with radiative cooling
\item \textbf{Electromagnetic modes}: Cavity resonances with ohmic losses in circular waveguides  
\item \textbf{Mechanical vibrations}: Membrane oscillations with material damping
\end{enumerate}

The key insight is that adding uniform dissipation $-\gamma u$ to the standard Laplacian preserves the classical eigenfunction structure while ensuring all eigenvalues are negative, satisfying our energy dissipation framework.

\begin{theorem}[Self-adjoint dissipative operator on the disk]\label{thm:disk_operator}
The operator $\mathcal{L}u = -\alpha^2 \Delta u - \gamma u$ with domain as defined in Definition \ref{def:disk_domain} is self-adjoint and dissipative on $L^2(\Omega, \mu)$.
\end{theorem}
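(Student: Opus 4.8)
The plan is to verify the two defining properties separately, after isolating the trivial part of the operator. Write $\mathcal{L} = -\alpha^2\Delta - \gamma I$, where $-\gamma I$ is a bounded self-adjoint multiple of the identity. Since a bounded symmetric operator added to a self-adjoint operator is again self-adjoint on the same domain, and since $-\gamma I$ contributes exactly $-\gamma\|u\|^2$ to the quadratic form, both self-adjointness and the dissipativity estimate reduce to statements about the diffusion part $-\alpha^2\Delta$ on the Dirichlet domain of Definition \ref{def:disk_domain}. Throughout I take $\Delta$ to be the nonnegative Laplace operator ($\inner{u}{\Delta u}_\mu \geq 0$), which is the convention forced by the surrounding framework: it is the one under which \eqref{eq:disk_evolution} is the forward, energy-decaying diffusion equation and under which the eigenvalue requirement $\lambda_k < 0$ of Assumption \ref{ass:selfadjoint} can be met.

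For self-adjointness I would first establish symmetry by Green's identity in the weighted space $L^2(\Omega, r\,dr\,d\theta)$. For $u,v \in \mathcal{D}(\mathcal{L})$,
\begin{equation}
\inner{u}{\Delta v}_\mu - \inner{\Delta u}{v}_\mu = \int_{\partial\Omega}\left(u\,\partial_n v - v\,\partial_n u\right)\dd s,
\end{equation}
and the boundary integral over $r=1$ vanishes because $u|_{r=1}=v|_{r=1}=0$; the locus $r=0$ is not a true boundary, and the weight $r\,dr$ together with the $H^2$-regularity imposed in Definition \ref{def:disk_domain} rules out any singular contribution there. This gives symmetry. To promote symmetry to self-adjointness I would identify $-\alpha^2\Delta$ on this domain with the Friedrichs realization associated to the closed, densely defined form $u\mapsto \alpha^2\|\nabla u\|_\mu^2$ with form domain $H_0^1(\Omega)$; equivalently, I would invoke that the Dirichlet Laplacian on the disk admits a complete orthonormal eigenbasis (Bessel radial profiles times angular phases), which are precisely the classical eigenfunctions left invariant by adding $-\gamma I$. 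Either route certifies the self-adjointness requirement (item 1) and, through the eigenbasis, the spectral completeness (item 3) of Assumption \ref{ass:selfadjoint}.

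For dissipativity I would evaluate the quadratic form directly. Integration by parts, with the $r=1$ boundary term killed by the Dirichlet condition, yields $\inner{u}{\Delta u}_\mu = \|\nabla u\|_\mu^2 \geq 0$ under the nonnegative-Laplacian convention, whence
\begin{equation}
\inner{u}{\mathcal{L}u}_\mu = -\alpha^2\|\nabla u\|_\mu^2 - \gamma\|u\|_\mu^2 \leq -\gamma\|u\|_\mu^2,
\end{equation}
which is exactly item 2 of Assumption \ref{ass:selfadjoint} with dissipation constant $\gamma$. Combined with the eigenbasis, this shows the eigenvalues are $\lambda_k = -\alpha^2\mu_k - \gamma \leq -\gamma < 0$, where $\mu_k \geq 0$ are the Dirichlet eigenvalues of $\Delta$, confirming strict negativity of the spectrum.

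I expect the main obstacle to be the self-adjointness step rather than dissipativity: symmetry is immediate from Green's identity, but upgrading symmetry to genuine self-adjointness requires showing that the stated domain coincides with the operator domain of the Friedrichs extension — in particular controlling regularity up to $r=1$ and excluding any defect contribution at the polar singularity $r=0$. Once the correct self-adjoint realization is pinned down (most cleanly via the Dirichlet form and its known Bessel eigenbasis), dissipativity follows from the one-line energy estimate above, and the bounded perturbation $-\gamma I$ is harmless for both properties.
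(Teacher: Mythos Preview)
Your approach matches the paper's: Green's identity (integration by parts in polar coordinates with the Dirichlet condition at $r=1$ and periodicity in $\theta$) to obtain symmetry, and the energy estimate $\inner{u}{\mathcal{L}u}_\mu = -\alpha^2\|\nabla u\|_\mu^2 - \gamma\|u\|_\mu^2 \leq -\gamma\|u\|_\mu^2$ for dissipativity. You are in fact more careful than the paper in flagging the upgrade from symmetry to genuine self-adjointness (via the Friedrichs realization or the complete Bessel/Zernike eigenbasis) and in making the nonnegative-Laplacian sign convention explicit; the paper's own proof stops at symmetry and tacitly treats that as self-adjointness.
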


\begin{proof}
\textit{Self-adjointness}: For $u,v \in \mathcal{D}(\mathcal{L})$:
\begin{align}
\inner{u}{\mathcal{L}v}_\mu &= -\alpha^2 \inner{u}{\Delta v}_\mu - \gamma \inner{u}{v}_\mu
\end{align}
For the Laplacian term, integration by parts in polar coordinates gives:
\begin{align}
\inner{u}{\Delta v}_\mu &= \int_0^{2\pi} \int_0^1 u \left(\frac{\partial^2 v}{\partial r^2} + \frac{1}{r}\frac{\partial v}{\partial r} + \frac{1}{r^2}\frac{\partial^2 v}{\partial \theta^2}\right) r\,dr\,d\theta
\end{align}
Using integration by parts and the Dirichlet boundary conditions $u|_{r=1} = v|_{r=1} = 0$, along with periodicity in $\theta$, all boundary terms vanish, yielding:
\begin{align}
\inner{u}{\Delta v}_\mu &= \int_0^{2\pi} \int_0^1 \left(\frac{\partial u}{\partial r}\frac{\partial v}{\partial r} + \frac{1}{r^2}\frac{\partial u}{\partial \theta}\frac{\partial v}{\partial \theta}\right) r\,dr\,d\theta = \inner{\Delta u}{v}_\mu
\end{align}
Therefore: $\inner{u}{\mathcal{L}v}_\mu = \inner{\mathcal{L}u}{v}_\mu$.

\textit{Dissipativity}: For $u \in \mathcal{D}(\mathcal{L})$:
\begin{align}
\inner{u}{\mathcal{L}u}_\mu &= -\alpha^2 \inner{u}{\Delta u}_\mu - \gamma \|u\|_\mu^2\\
&= -\alpha^2 \int_0^{2\pi} \int_0^1 \left|\nabla u\right|^2 r\,dr\,d\theta - \gamma \|u\|_\mu^2 \leq -\gamma \|u\|_\mu^2
\end{align}
where the first term is non-positive and the second provides uniform dissipation.
\end{proof}

\subsection{Classical Zernike polynomial eigenfunctions}

\begin{theorem}[Classical Zernike eigenfunctions]\label{thm:zernike_eigen}
The eigenvalue problem $-\alpha^2 \Delta Z - \gamma Z = \lambda Z$ on $\Omega$ with $Z|_{r=1}=0$ has solutions:
\begin{equation}
Z_n^m(r,\theta) = R_n^m(r) e^{im\theta}, \quad n = |m|, |m|+2, \ldots
\end{equation}
where $R_n^m(r)$ are the classical radial Zernike polynomials satisfying the Dirichlet boundary condition, with eigenvalues:
\begin{equation}
\lambda_n^m = -\alpha^2 \mu_n^m - \gamma < 0
\end{equation}
where $\mu_n^m > 0$ are the eigenvalues of $-\Delta$ with Dirichlet boundary conditions on the unit disk.
\end{theorem}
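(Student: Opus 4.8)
The plan is to exploit the fact that the zeroth-order dissipation term $-\gamma$ acts as a scalar multiple of the identity and therefore only shifts the spectrum without altering eigenfunctions. First I would rewrite the eigenvalue equation $-\alpha^2 \Delta Z - \gamma Z = \lambda Z$ in the equivalent form $-\Delta Z = \mu Z$ with $\mu = (\lambda + \gamma)/\alpha^2$. Any eigenfunction of the Dirichlet Laplacian $-\Delta$ with eigenvalue $\mu > 0$ is then automatically an eigenfunction of $\mathcal{L}$, with $\lambda = -\alpha^2 \mu - \gamma$; since $\alpha^2 > 0$, $\gamma > 0$, and $\mu > 0$, this yields $\lambda < 0$ immediately. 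This reduction is the robust, essentially algebraic part of the argument, and it already delivers both the claimed eigenvalue formula and the sign condition required for the energy framework.

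Next I would separate variables. Writing $Z(r,\theta) = R(r)\Theta(\theta)$ and using the polar form of the Laplacian recorded in the proof of Theorem~\ref{thm:disk_operator}, the angular equation $\Theta'' = -m^2\Theta$ together with $2\pi$-periodicity forces $\Theta(\theta) = e^{im\theta}$ with $m \in \mathbb{Z}$. Substituting $R(r)e^{im\theta}$ into $-\Delta Z = \mu Z$ reduces the problem to the radial ordinary differential equation $R'' + r^{-1}R' - m^2 r^{-2} R + \mu R = 0$ on $(0,1)$, subject to regularity at $r = 0$ and the Dirichlet condition $R(1) = 0$. The discrete spectrum $\mu_n^m$ and the corresponding radial profiles are then to be read off from this boundary value problem.

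The main obstacle—and the step I expect to be genuinely difficult as stated—is the identification of the regular Dirichlet radial solutions with the \emph{classical} radial Zernike polynomials $R_n^m$. The radial equation above is Bessel's equation after the rescaling $\rho = \sqrt{\mu}\,r$, so its solutions regular at the origin are $R(r) = J_m(\sqrt{\mu}\,r)$, and the boundary condition selects the discrete values $\mu_n^m = j_{m,k}^2$ at the positive zeros of $J_m$. These eigenfunctions are transcendental Bessel functions, not polynomials; moreover, the classical normalization gives $R_n^m(1) = 1 \ne 0$, so the classical Zernike polynomials do not even satisfy the stated Dirichlet condition. I therefore anticipate that the separation-of-variables route terminates at Bessel profiles rather than Zernike polynomials, and I would flag that closing this gap requires revisiting the hypotheses: either replacing $-\Delta$ by the weighted Zernike operator $-\nabla\cdot((1-r^2)\nabla) + \cdots$, for which the $R_n^m$ genuinely are eigenfunctions, or dropping the Dirichlet condition in favor of the natural boundary behavior that renders that operator self-adjoint. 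Absent such a modification, I do not see how the radial step can be carried through, and I would resolve the operator-versus-boundary-condition mismatch before attempting it.
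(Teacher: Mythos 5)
Your reduction and separation of variables reproduce the paper's own proof step for step: the paper likewise rewrites the problem as $-\alpha^2\Delta Z=(\lambda+\gamma)Z$, sets $Z=R(r)e^{im\theta}$, and arrives at the radial boundary value problem $\frac{1}{r}(rR')'-\frac{m^2}{r^2}R=-\mu R$ with $R(1)=0$ and $R(0)$ finite. Where you part company is the final identification, and you are right to do so. The paper's proof simply declares that this ``is precisely the classical radial Zernike equation'' and reads off the polynomials $R_n^m$; no justification is offered, and none is possible. As you observe, the radial equation is Bessel's equation, its solutions regular at the origin are $J_{|m|}(\sqrt{\mu}\,r)$, the Dirichlet condition quantizes $\mu$ at the squared positive zeros $j_{|m|,k}^2$ of $J_{|m|}$, and the resulting eigenfunctions are transcendental, not polynomial. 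The classical $R_n^m$ instead satisfy the degenerate equation $(1-r^2)R''+(r^{-1}-3r)R'-m^2r^{-2}R+n(n+2)R=0$ and are normalized with $R_n^m(1)=1$, so they satisfy neither the eigenvalue equation nor the boundary condition of Theorem~\ref{thm:zernike_eigen}. The gap you flag is thus a genuine error in the statement rather than a failure of your method, and your proposed repairs---pass to the weighted Zernike operator with its natural boundary behavior, or accept Bessel eigenfunctions with the $k$-indexed spectrum---are the correct alternatives. Note also that the theorem's indexing $n=|m|,|m|+2,\ldots$ belongs to the Zernike family and does not match the Bessel spectrum that the boundary value problem actually produces.

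One correction to the part of your argument you describe as robust: from your own definition $\mu=(\lambda+\gamma)/\alpha^2$ one gets $\lambda=\alpha^2\mu-\gamma$, not $\lambda=-\alpha^2\mu-\gamma$, and with the operator literally as written, $\mathcal{L}=-\alpha^2\Delta-\gamma I$ with $-\Delta$ positive, the eigenvalues $\alpha^2\mu_k-\gamma$ tend to $+\infty$, so $\mathcal{L}$ is not dissipative at all. This traces to a sign error in the paper itself (visible in the dissipativity computation of Theorem~\ref{thm:disk_operator}, where $-\alpha^2\inner{u}{\Delta u}_\mu=+\alpha^2\int|\nabla u|^2\,r\,dr\,d\theta\geq 0$, with the wrong sign recorded); the intended operator is evidently $\mathcal{L}=\alpha^2\Delta-\gamma I$, for which your shift argument does yield $\lambda=-\alpha^2\mu-\gamma<0$. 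You inherited the paper's sign rather than introducing your own error, but the inconsistency between your stated $\mu$ and your stated $\lambda$ should be resolved before the ``algebraic part'' can be called robust.
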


\begin{proof}
Separation of variables with $Z(r,\theta) = R(r)e^{im\theta}$ in the eigenvalue equation $-\alpha^2 \Delta Z = (\lambda + \gamma) Z$ leads to the standard radial equation:
\begin{equation}\label{eq:radial_zernike}
\frac{1}{r}\frac{d}{dr}\left(r\frac{dR}{dr}\right) - \frac{m^2}{r^2}R = -\frac{\mu}{\alpha^2}R
\end{equation}
with boundary conditions $R(1) = 0$ and $R(0)$ finite, where $\mu = (\lambda + \gamma)/\alpha^2 > 0$.

This is precisely the classical radial Zernike equation. The solutions are the standard Zernike radial polynomials $R_n^m(r)$ with $n = |m|, |m|+2, \ldots$ and corresponding eigenvalues $\mu_n^m$. The full eigenvalues of $\mathcal{L}$ are then $\lambda_n^m = -\alpha^2 \mu_n^m - \gamma < 0$.
\end{proof}

\begin{remark}[Connection to established Zernike theory]\label{rem:zernike_context}
Our approach connects directly to the extensive literature on Zernike polynomials in optics and engineering \citep{BornWolf,Mahajan1994,Janssen2014}:

\textbf{Classical Zernike theory}: Studies the eigenfunctions of $-\Delta$ on the unit disk with various boundary conditions.

\textbf{Our contribution}: Shows that adding uniform dissipation $-\gamma u$ preserves the Zernike eigenfunction structure while ensuring all eigenvalues are negative, as required by the energy dissipation framework.

The mathematical consequence is that our Zernike basis $\{Z_n^m : n \geq |m|\}$ consists of the classical Zernike polynomials, maintaining all established properties while satisfying the physical constraints of the energy balance theory.
\end{remark}

\subsection{Energy covariance in the classical Zernike basis}

Applying the general energy framework to the classical Zernike system:

\begin{theorem}[Classical Zernike energy representation]\label{thm:zernike_energy}
The energy covariance operator in the classical Zernike basis has matrix elements:
\begin{equation}
(P)_{nm,n'm'} = \frac{(Q)_{nm,n'm'}}{-(\lambda_n^m + \lambda_{n'}^{m'})} = \frac{(Q)_{nm,n'm'}}{\alpha^2(\mu_n^m + \mu_{n'}^{m'}) + 2\gamma}
\end{equation}
where $(Q)_{nm,n'm'} = \inner{Z_n^m}{QZ_{n'}^{m'}}_\mu$ and $\mu_n^m > 0$ are the classical Laplacian eigenvalues.
\end{theorem}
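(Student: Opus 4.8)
The plan is to obtain this result as a direct specialization of the abstract spectral formula (Theorem \ref{thm:spectral_covariance}) to the concrete dissipative Laplacian $\mathcal{L}u = -\alpha^2\Delta u - \gamma u$ on the disk. The substance of the argument is verifying that the Zernike system meets every hypothesis of the general framework; the closed form then follows by substituting the explicit eigenvalues and simplifying the denominator. First I would confirm that Assumption \ref{ass:selfadjoint} holds here: self-adjointness and dissipativity are supplied by Theorem \ref{thm:disk_operator}, while the spectral-completeness requirement is met because the eigenfunctions $Z_n^m(r,\theta)=R_n^m(r)e^{im\theta}$ of Theorem \ref{thm:zernike_eigen} form a complete orthogonal system in $L^2(\Omega,\mu)$ with strictly negative eigenvalues $\lambda_n^m = -\alpha^2\mu_n^m-\gamma$. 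I would then relabel the single index $k$ of the general theory by the double multi-index $(n,m)$, identifying $\{\phi_k\}$ with $\{Z_n^m\}$.

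With the hypotheses in place, I would invoke the matrix-element identity established in Theorem \ref{thm:spectral_covariance}, namely $\inner{\phi_k}{P\phi_l} = \inner{\phi_k}{Q\phi_l}/(-(\lambda_k+\lambda_l))$, and read it in the Zernike labelling to obtain $(P)_{nm,n'm'} = (Q)_{nm,n'm'}/(-(\lambda_n^m+\lambda_{n'}^{m'}))$. The passage from the time integral to this ratio is legitimate because the absolute convergence of Theorem \ref{thm:rigorous_convergence} permits interchanging the $t$-integration with evaluation against a fixed pair of modes, and $e^{\mathcal{L}t}Z_n^m = e^{\lambda_n^m t}Z_n^m$ reduces the integrand to $e^{(\lambda_n^m+\lambda_{n'}^{m'})t}\,(Q)_{nm,n'm'}$. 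It then only remains to simplify the denominator: substituting $\lambda_n^m = -\alpha^2\mu_n^m-\gamma$ gives $-(\lambda_n^m+\lambda_{n'}^{m'}) = \alpha^2(\mu_n^m+\mu_{n'}^{m'})+2\gamma$, which is strictly positive and yields the stated second equality. Note that no explicit evaluation of the noise matrix elements $(Q)_{nm,n'm'}$ is required, so this part is purely algebraic.

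The one point requiring genuine care, and the closest thing to an obstacle, is normalization: the classical radial polynomials $R_n^m$ are not unit-normalized in $L^2(\Omega,\mu)$ (they carry the standard radial weight together with the $2\pi$ angular factor), whereas Theorem \ref{thm:spectral_covariance} is phrased for an orthonormal eigenbasis. I would resolve this by observing that the matrix-element identity is invariant under rescaling each eigenfunction: writing $\hat Z_n^m = Z_n^m/\|Z_n^m\|_\mu$, the orthonormal version reads $\inner{\hat Z_n^m}{P\hat Z_{n'}^{m'}} = \inner{\hat Z_n^m}{Q\hat Z_{n'}^{m'}}/(-(\lambda_n^m+\lambda_{n'}^{m'}))$, and multiplying both sides by $\|Z_n^m\|_\mu\|Z_{n'}^{m'}\|_\mu$ cancels the norm factors identically, since the same product appears in the $P$- and $Q$-matrix elements. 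Hence the formula holds verbatim for the classical (unnormalized) basis in which both $(P)_{nm,n'm'}$ and $(Q)_{nm,n'm'}$ are defined, and the remaining work is routine index bookkeeping.
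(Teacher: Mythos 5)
Your proposal is correct and follows essentially the same route as the paper: specialize the spectral representation of Theorem \ref{thm:spectral_covariance} to the Zernike eigenbasis, substitute $\lambda_n^m = -\alpha^2\mu_n^m - \gamma$, and simplify the denominator to $\alpha^2(\mu_n^m+\mu_{n'}^{m'})+2\gamma$. Your additional checks --- verifying the hypotheses of the abstract framework and the scale-invariance of the matrix-element identity under the non-unit normalization of the classical $Z_n^m$ --- are sound and go slightly beyond what the paper's own (purely substitutional) proof records.
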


\begin{proof}
From the general spectral representation and the eigenvalues $\lambda_n^m = -\alpha^2 \mu_n^m - \gamma$:
\begin{align}
(P)_{nm,n'm'} &= \frac{(Q)_{nm,n'm'}}{-(\lambda_n^m + \lambda_{n'}^{m'})}\\
&= \frac{(Q)_{nm,n'm'}}{-(-\alpha^2 \mu_n^m - \gamma - \alpha^2 \mu_{n'}^{m'} - \gamma)}\\
&= \frac{(Q)_{nm,n'm'}}{\alpha^2(\mu_n^m + \mu_{n'}^{m'}) + 2\gamma}
\end{align}
For diagonal terms where $n = n'$ and $m = m'$:
\begin{equation}
(P)_{nm,nm} = \frac{(Q)_{nm,nm}}{\alpha^2(2\mu_n^m) + 2\gamma} = \frac{(Q)_{nm,nm}}{2(\alpha^2 \mu_n^m + \gamma)}
\end{equation}
\end{proof}

\begin{proposition}[Noise-dependent energy structure]\label{prop:noise_structure}
\begin{enumerate}
\item \textbf{Uncorrelated mode noise}: If $(Q)_{nm,n'm'} = q_{nm} \delta_{nn'} \delta_{mm'}$, then $P$ is diagonal with:
\begin{equation}
(P)_{nm,nm} = \frac{q_{nm}}{2(\alpha^2 \mu_n^m + \gamma)}
\end{equation}

\item \textbf{Radially symmetric noise}: If $Q$ commutes with rotations, then $(Q)_{nm,n'm'} = 0$ unless $m = m'$, yielding block-diagonal structure in $P$ with blocks indexed by azimuthal quantum number $m$.

\item \textbf{General correlated noise}: For arbitrary $Q$, the matrix $P$ has full structure with off-diagonal terms:
\begin{equation}
(P)_{nm,n'm'} = \frac{(Q)_{nm,n'm'}}{\alpha^2(\mu_n^m + \mu_{n'}^{m'}) + 2\gamma} \quad \text{for } (n,m) \neq (n',m')
\end{equation}
\end{enumerate}
All cases automatically ensure positive diagonal elements since $q_{nm} > 0$ and $\alpha^2 \mu_n^m + \gamma > 0$.
\end{proposition}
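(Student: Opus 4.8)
The plan is to verify each of the three cases directly from the master spectral formula established in Theorem~\ref{thm:zernike_energy}, namely
\begin{equation*}
(P)_{nm,n'm'} = \frac{(Q)_{nm,n'm'}}{\alpha^2(\mu_n^m + \mu_{n'}^{m'}) + 2\gamma},
\end{equation*}
which already encodes the full structure of $P$ in terms of the noise matrix elements $(Q)_{nm,n'm'} = \inner{Z_n^m}{QZ_{n'}^{m'}}_\mu$. Each claim is a statement about how symmetry or sparsity assumptions on $Q$ propagate to $P$ through this rational map; since the denominator is nonzero and strictly positive for all index pairs (as $\mu_n^m > 0$ and $\gamma > 0$), the map $(Q)_{nm,n'm'} \mapsto (P)_{nm,n'm'}$ is well defined entrywise and inherits any vanishing pattern of $Q$ directly.

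For case (1), I would substitute the diagonal ansatz $(Q)_{nm,n'm'} = q_{nm}\,\delta_{nn'}\delta_{mm'}$ into the master formula: the Kronecker deltas force all off-diagonal entries of $P$ to vanish, and on the diagonal the denominator collapses via $\mu_n^m + \mu_n^m = 2\mu_n^m$ to give $(P)_{nm,nm} = q_{nm}/\bigl(2(\alpha^2\mu_n^m + \gamma)\bigr)$, exactly as in the diagonal computation already performed in the proof of Theorem~\ref{thm:zernike_energy}. For case (3), there is essentially nothing to prove beyond restating the master formula restricted to index pairs with $(n,m)\neq(n',m')$, so I would simply record that the off-diagonal entries are given by the same rational expression with no forced cancellation.

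Case (2) is the one requiring a genuine argument, and I expect it to be the \textbf{main obstacle}. The hypothesis is that $Q$ commutes with the unitary rotation group acting on $L^2(\Omega,\mu)$; the key observation is that the eigenfunctions $Z_n^m = R_n^m(r)e^{im\theta}$ are simultaneous eigenfunctions of the rotation generator $\partial_\theta$ with eigenvalue $im$. I would let $U_\beta$ denote rotation by angle $\beta$, so that $U_\beta Z_n^m = e^{im\beta} Z_n^m$, and then compute
\begin{equation*}
(Q)_{nm,n'm'} = \inner{Z_n^m}{QZ_{n'}^{m'}}_\mu = \inner{U_\beta Z_n^m}{U_\beta Q Z_{n'}^{m'}}_\mu = e^{i(m'-m)\beta}\,(Q)_{nm,n'm'},
\end{equation*}
using unitarity of $U_\beta$ and the commutation $U_\beta Q = Q U_\beta$. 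Since this identity must hold for all $\beta$, the factor $e^{i(m'-m)\beta}$ must equal $1$ identically whenever $(Q)_{nm,n'm'}\neq 0$, forcing $m = m'$. This establishes that $(Q)_{nm,n'm'} = 0$ unless $m = m'$, and the same vanishing then transfers to $P$ through the master formula, giving the claimed block-diagonal structure indexed by the azimuthal number $m$. The delicate point to state carefully is the invariance of the measure $\dd\mu = r\dd r\dd\theta$ under rotation (so that $U_\beta$ is genuinely unitary on $L^2(\Omega,\mu)$) and the precise meaning of ``$Q$ commutes with rotations.''

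Finally, for the positivity assertion closing the proposition, I would note that it follows immediately in all three cases: the diagonal entries are $(P)_{nm,nm} = (Q)_{nm,nm}/\bigl(2(\alpha^2\mu_n^m+\gamma)\bigr)$, where the numerator is positive by the positivity of $Q$ (Assumption~\ref{ass:noise}) evaluated on $Z_n^m$, and the denominator is positive since $\alpha^2\mu_n^m + \gamma > 0$. This is consistent with the general positive-definiteness of $P$ already guaranteed by Theorem~\ref{thm:covariance_properties}, so no independent work is needed beyond citing that diagonal elements of a positive operator are nonnegative.
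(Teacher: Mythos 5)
Your proposal is correct, and it is worth noting that the paper supplies no proof of this proposition at all: it is stated as an immediate corollary of the spectral formula in Theorem~\ref{thm:zernike_energy}, with cases (1) and (3) being direct substitutions and case (2) left implicit. Your treatment of cases (1) and (3) matches exactly what the paper's formula gives. The genuine added value in your write-up is the Schur-type rotation argument for case (2): the identity $(Q)_{nm,n'm'} = e^{i(m'-m)\beta}(Q)_{nm,n'm'}$ for all $\beta$, forcing $m=m'$ whenever the matrix element is nonzero, is the correct and standard way to make ``$Q$ commutes with rotations implies block-diagonality'' rigorous, and the paper nowhere spells it out. Two small points of care: first, the paper's inner product $\inner{\phi}{\psi}_\mu = \int \phi\,\psi\, r\dd r\dd\theta$ is written bilinearly, whereas your phase computation tacitly uses a sesquilinear (conjugate-linear in one slot) pairing; since the eigenfunctions $e^{im\theta}$ are complex, you should state explicitly that you work in the complex Hilbert space with the Hermitian inner product, under which $U_\beta$ is unitary and your cancellation $e^{-im\beta}e^{im'\beta}$ is valid. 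Second, strict positivity of the diagonal entries requires $q_{nm} = \inner{Z_n^m}{QZ_n^m}_\mu > 0$ rather than merely $\geq 0$; the proposition builds this in as a hypothesis ($q_{nm}>0$), so your citation of the positivity of $Q$ should be flagged as giving only nonnegativity in general, with strict positivity taken as assumed. Neither point is a gap in substance.
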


\section{Universal Framework: Extensions to Other Classical Polynomial Systems}\label{sec:universal}

\subsection{Damped harmonic oscillator and Hermite polynomials}

The energy dissipation framework extends naturally to other established polynomial systems. Consider the quantum harmonic oscillator on $\R^d$:

\begin{example}[Damped harmonic oscillator]\label{ex:hermite}
\textbf{Physical system}: Quantum harmonic oscillator with uniform damping
\begin{equation}
\frac{\partial u}{\partial t} = \left(-\frac{1}{2}\nabla^2 + \frac{1}{2}|x|^2\right) u - \gamma u + f(x,t)
\end{equation}
where $\gamma > d/2$ ensures all eigenvalues are negative.

\textbf{Hilbert space}: $L^2(\R^d)$ with standard Lebesgue measure

\textbf{Self-adjoint operator}: $\mathcal{L} = -\left(\frac{1}{2}\nabla^2 - \frac{1}{2}|x|^2\right) - \gamma I$ 

\textbf{Eigenfunctions}: Classical Hermite functions 
\begin{equation}
\psi_{\mathbf{n}}(x) = \prod_{i=1}^d H_{n_i}(x_i) e^{-x_i^2/2} / \sqrt{2^{|\mathbf{n}|} \mathbf{n}! \pi^{d/2}}
\end{equation}
where $\mathbf{n} = (n_1, \ldots, n_d)$ and $|\mathbf{n}| = n_1 + \cdots + n_d$.

\textbf{Eigenvalues}: The standard harmonic oscillator has eigenvalues $|\mathbf{n}| + d/2$. With dissipation:
\begin{equation}
\lambda_{\mathbf{n}} = -\left(|\mathbf{n}| + \frac{d}{2}\right) - \gamma < 0
\end{equation}
for all $\mathbf{n}$ when $\gamma > 0$.

\textbf{Energy covariance}: For diagonal noise: $(P)_{\mathbf{n}\mathbf{n}} = q_{\mathbf{n}}/(2(|\mathbf{n}| + d/2 + \gamma))$.
\end{example}

\subsection{Spherical domains and spherical harmonics}

\begin{example}[Spherical harmonic realization]\label{ex:spherical}
\textbf{Physical system}: Damped diffusion on the sphere $S^2$
\begin{equation}
\frac{\partial u}{\partial t} = -\alpha^2\Delta_{S^2} u - \gamma u + f(\theta,\phi,t)
\end{equation}
where $\gamma > 0$ provides uniform dissipation.

\textbf{Hilbert space}: $L^2(S^2)$ with standard measure

\textbf{Self-adjoint operator}: $\mathcal{L} = -\alpha^2\Delta_{S^2} - \gamma I$

\textbf{Eigenfunctions}: Classical spherical harmonics $Y_l^m(\theta,\phi)$

\textbf{Eigenvalues}: The spherical Laplacian has eigenvalues $l(l+1)$. With dissipation:
\begin{equation}
\lambda_l = -\alpha^2 l(l+1) - \gamma < 0 \quad \text{for all } l \geq 0
\end{equation}

\textbf{Energy covariance}: Block-diagonal structure when noise respects rotational symmetry, with matrix elements $(P)_{lm,lm'} = q_{lm}/(2(\alpha^2 l(l+1) + \gamma)) \delta_{mm'}$ for rotationally symmetric noise.
\end{example}

\subsection{General principle}

\begin{theorem}[Universal energy-polynomial correspondence]\label{thm:universal}
For any classical differential operator $\mathcal{D}$ with orthogonal polynomial eigenfunctions, the modified operator $\mathcal{L} = -\mathcal{D} - \gamma I$ (with appropriate $\gamma > 0$) satisfies:
\begin{enumerate}
\item \textbf{Preserved eigenfunction structure}: The eigenfunctions remain the classical polynomials
\item \textbf{Negative eigenvalues}: All eigenvalues become $\lambda_k = -\mu_k - \gamma < 0$ where $\mu_k \geq 0$ are the original eigenvalues
\item \textbf{Energy framework compatibility}: The energy dissipation integral converges and provides natural connections to matrix Lyapunov equations
\item \textbf{Physical interpretation}: The uniform dissipation $-\gamma I$ represents energy losses while preserving the geometric structure of the original operator
\end{enumerate}

This framework thus connects established polynomial systems to matrix equations through physically motivated operator modifications that preserve mathematical structure while ensuring energy balance.
\end{theorem}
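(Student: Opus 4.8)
The plan is to show that Theorem~\ref{thm:universal} is essentially a corollary of the general machinery in Sections~\ref{sec:master}--\ref{sec:spectral}: all four claimed properties follow once one verifies that the shifted operator $\mathcal{L} = -\mathcal{D} - \gamma I$ falls within the hypothesis class of Assumption~\ref{ass:selfadjoint}, after which Theorems~\ref{thm:rigorous_convergence}, \ref{thm:covariance_properties}, \ref{thm:spectral_covariance} and \ref{thm:matrix_energy} apply verbatim. The key structural fact I would invoke is that the classical orthogonal polynomial operators (Hermite, Laguerre, Jacobi/Legendre and their tensor and spherical analogues) are precisely the second-order Sturm--Liouville operators of the Bochner class that are self-adjoint with non-negative discrete spectrum in their natural weighted $L^2$ spaces, possessing a complete orthonormal eigenbasis of polynomials. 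I would therefore begin by stating this as the operative hypothesis on $\mathcal{D}$: namely $\mathcal{D} = \mathcal{D}^* \geq 0$ with $\mathcal{D}\phi_k = \mu_k \phi_k$, $\mu_k \geq 0$, and $\{\phi_k\}$ a complete orthonormal system.

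Properties~1 and~2 are then immediate. Since $-\gamma I$ is a scalar multiple of the identity it does not alter eigenvectors, so $\mathcal{L}\phi_k = (-\mu_k - \gamma)\phi_k$, giving $\lambda_k = -\mu_k - \gamma \leq -\gamma < 0$ for every $\gamma > 0$ because $\mu_k \geq 0$. I would then verify the remaining clauses of Assumption~\ref{ass:selfadjoint}: self-adjointness is inherited as $\mathcal{L}^* = -\mathcal{D}^* - \gamma I = \mathcal{L}$; dissipativity follows from $\inner{\phi}{\mathcal{L}\phi} = -\inner{\phi}{\mathcal{D}\phi} - \gamma\|\phi\|^2 \leq -\gamma\|\phi\|^2$ using $\mathcal{D} \geq 0$; and spectral completeness is inherited from the completeness of $\{\phi_k\}$. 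The semigroup bound of Assumption~\ref{ass:semigroup} then follows from the spectral theorem exactly as in Theorem~\ref{thm:semigroup_bounds}, since $\max_k \lambda_k = -\mu_{\min} - \gamma \leq -\gamma$.

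With $\mathcal{L}$ confirmed to satisfy Assumptions~\ref{ass:selfadjoint}--\ref{ass:semigroup}, Property~3 is a direct citation: Theorem~\ref{thm:rigorous_convergence} yields absolute convergence of the energy integral, Theorem~\ref{thm:spectral_covariance} yields the spectral form $P = \sum_{k,l} \inner{\phi_k}{Q\phi_l}/(-(\lambda_k+\lambda_l))\,\phi_k \otimes \phi_l$, and Theorem~\ref{thm:matrix_energy} produces the finite-dimensional Lyapunov equation $\Lambda P_N + P_N \Lambda = -Q_N$ with $\Lambda = \mathrm{diag}(-\mu_k - \gamma)$. Property~4 is interpretive rather than computational and is a restatement of Property~1: the uniform shift $-\gamma I$ translates the entire spectrum by $-\gamma$ while leaving the eigenfunctions—hence the geometric structure of the modes and the orthogonality weight—untouched.

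The main obstacle, and the only place where genuine care is required, is the phrase ``any classical differential operator $\mathcal{D}$ with orthogonal polynomial eigenfunctions,'' which taken literally is too loose: the downstream theorems require $\mathcal{D}$ to be self-adjoint, bounded below by $0$, and equipped with a complete eigenbasis in a fixed Hilbert space. Operators that are only formally symmetric, or whose polynomial eigenfunctions fail to be square-integrable against the relevant weight, would not qualify. I would therefore make the argument rigorous by restricting to the Bochner class—equivalently, by adopting these three conditions as explicit hypotheses—and remarking that each worked realization (Zernike via Theorem~\ref{thm:disk_operator}, the damped Hermite system, and the spherical Laplacian) has already been checked to lie in this class. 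Once the hypotheses are pinned down in this way, no new estimates are needed and the theorem reduces to bookkeeping over the established results.
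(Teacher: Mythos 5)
The paper states Theorem~\ref{thm:universal} without any proof at all, so there is nothing in the source to compare against line by line; your proposal supplies exactly the argument the paper's structure implies but never writes down. Your reduction is correct: once you adopt $\mathcal{D}=\mathcal{D}^*\geq 0$ with a complete orthonormal polynomial eigenbasis as the operative hypothesis, properties~1 and~2 are immediate from the fact that a scalar shift preserves eigenvectors and translates the spectrum, the verification of Assumptions~\ref{ass:selfadjoint}--\ref{ass:semigroup} is routine, and property~3 follows by citing Theorems~\ref{thm:rigorous_convergence}, \ref{thm:spectral_covariance} and \ref{thm:matrix_energy}. Your main added value is the observation that ``any classical differential operator with orthogonal polynomial eigenfunctions'' is too loose as stated --- the downstream machinery genuinely needs self-adjointness, non-negativity, and completeness of the eigenbasis in a fixed weighted $L^2$ space --- and your restriction to the Bochner/Sturm--Liouville class (or equivalently the adoption of those three conditions as explicit hypotheses) repairs a gap the paper leaves open. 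One minor point worth flagging if you write this up: the paper's Hermite example demands $\gamma > d/2$ while the theorem says only ``appropriate $\gamma>0$''; under your hypothesis $\mu_k\geq 0$ any $\gamma>0$ suffices, so your version is internally consistent and the paper's stronger condition in Example~\ref{ex:hermite} is superfluous (the harmonic-oscillator eigenvalues $|\mathbf{n}|+d/2$ are already positive). No missing ideas; the proof is bookkeeping over the established results, as you say.
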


\section{Mathematical Structure and Convergence Theory}\label{sec:convergence}

\subsection{Approximation theory for energy integrals}

The physical foundation of our framework naturally leads to rigorous approximation theory:

\begin{theorem}[Energy approximation error]\label{thm:energy_error}
Let $P$ be the infinite-dimensional energy covariance and $P_N$ its $N$-dimensional projection. For self-adjoint dissipative systems satisfying Assumptions \ref{ass:selfadjoint}-\ref{ass:semigroup}:
\begin{equation}
\|P - P_N\|_{\op} \leq \frac{\|Q\|_{\op}}{2\gamma}
\end{equation}
where $\gamma > 0$ is the global dissipation constant.
\end{theorem}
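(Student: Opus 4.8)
The plan is to avoid estimating the double spectral sum for $P-P_N$ directly and instead recover an integral representation for the difference to which the estimate of Theorem \ref{thm:rigorous_convergence} applies almost verbatim. The crucial structural observation is that $\Pi_N=\sum_{k=1}^N \phi_k\otimes\phi_k$ is the spectral projection onto the top $N$ eigenvectors of the self-adjoint operator $\mathcal{L}$, so it commutes with the semigroup, $e^{\mathcal{L}t}\Pi_N=\Pi_N e^{\mathcal{L}t}$ for every $t\geq 0$. First I would combine this commutation with Definition \ref{def:finite_projection} and the integral representation \eqref{eq:energy_integral}, pulling the projections through the semigroup factors, to obtain for all $\phi,\psi\in\H$
\begin{equation}
\inner{\phi}{P_N\psi}=\inner{\Pi_N\phi}{P\,\Pi_N\psi}=\int_0^\infty\inner{\phi}{e^{\mathcal{L}t}\,(\Pi_N Q\Pi_N)\,e^{\mathcal{L}t}\psi}\dd t .
\end{equation}
Thus $P_N=\int_0^\infty e^{\mathcal{L}t}(\Pi_N Q\Pi_N)e^{\mathcal{L}t}\dd t$, and subtracting from the representation of $P$ gives
\begin{equation}
P-P_N=\int_0^\infty e^{\mathcal{L}t}\,\widetilde{Q}\,e^{\mathcal{L}t}\dd t,\qquad \widetilde{Q}:=Q-\Pi_N Q\Pi_N .
\end{equation}

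Next I would run the identical chain of inequalities proved in Theorem \ref{thm:rigorous_convergence}, but with $Q$ replaced by the self-adjoint operator $\widetilde{Q}$: Cauchy--Schwarz, submultiplicativity, and the contraction bound $\|e^{\mathcal{L}t}\|\leq e^{-\gamma t}$ from Assumption \ref{ass:semigroup} yield
\begin{equation}
\|P-P_N\|_{\op}\leq \|\widetilde{Q}\|_{\op}\int_0^\infty e^{-2\gamma t}\dd t=\frac{\|\widetilde{Q}\|_{\op}}{2\gamma}.
\end{equation}
It therefore remains only to show that the norm does not grow under deletion of the compression, namely $\|\widetilde{Q}\|_{\op}\leq\|Q\|_{\op}$.

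The hard part is exactly this last inequality, and it is where the positivity of $Q$ (Assumption \ref{ass:noise}) is indispensable: for a general self-adjoint $Q$, removing the compression $\Pi_N Q\Pi_N$ can increase the operator norm, so the clean bound would fail. Setting $M:=\|Q\|_{\op}$, positivity gives $0\leq Q\leq MI$, and since a compression of a positive operator is positive with no larger norm, $0\leq\Pi_N Q\Pi_N\leq MI$. I would then bound the self-adjoint $\widetilde{Q}$ from both sides. The upper bound is immediate, $\widetilde{Q}=Q-\Pi_N Q\Pi_N\leq Q\leq MI$, using $\Pi_N Q\Pi_N\geq 0$. For the lower bound I would write
\begin{equation}
\widetilde{Q}+MI=Q+(MI-\Pi_N Q\Pi_N),
\end{equation}
a sum of two positive operators (the first by $Q\geq 0$, the second by $\Pi_N Q\Pi_N\leq MI$), whence $\widetilde{Q}\geq -MI$. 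Together these give $-MI\leq\widetilde{Q}\leq MI$, i.e.\ $\|\widetilde{Q}\|_{\op}\leq M=\|Q\|_{\op}$, which closes the estimate. The only remaining technical point is the rigorous justification, in the infinite-dimensional unbounded setting, of the commutation of $\Pi_N$ with the semigroup and of the operator-valued integral manipulations; these are underwritten by the strong continuity and the uniform exponential decay guaranteed by Assumptions \ref{ass:selfadjoint}--\ref{ass:semigroup}.
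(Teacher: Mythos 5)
Your proof is correct, but it takes a genuinely different route from the paper's. The paper works from the spectral double sum for $P-P_N$ (over indices with $k>N$ or $l>N$), uses $|\lambda_k+\lambda_l|\geq 2\gamma$, and then splits the residual part of $Q$ into the three blocks $(I-\Pi_N)Q(I-\Pi_N)$, $\Pi_N Q(I-\Pi_N)$, $(I-\Pi_N)Q\Pi_N$, asserting that the sum of their operator norms is bounded by $\|Q\|_{\op}$ --- a step that, as written, only yields $3\|Q\|_{\op}$ by the triangle inequality and so leaves a genuine gap. You instead exploit the fact that $\Pi_N$ is a spectral projection of $\mathcal{L}$ and therefore commutes with $e^{\mathcal{L}t}$, which converts $P-P_N$ into a single energy integral driven by $\widetilde{Q}=Q-\Pi_N Q\Pi_N$; the estimate of Theorem \ref{thm:rigorous_convergence} then applies verbatim, and the whole burden shifts to the operator inequality $\|Q-\Pi_N Q\Pi_N\|_{\op}\leq\|Q\|_{\op}$, which you establish correctly by sandwiching the self-adjoint $\widetilde{Q}$ between $-\|Q\|_{\op}I$ and $\|Q\|_{\op}I$ using positivity of $Q$ and of its compression. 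Your version buys rigor (it repairs the constant that the paper's triangle-inequality step does not actually deliver) and makes transparent exactly where positivity of $Q$ is used; the paper's version stays closer to the spectral picture and would adapt more directly to the sharpened bound $\|Q\|_{\op}/(2|\lambda_{N+1}|)$ stated in the subsequent corollary, where the eigenvalue gap rather than the uniform constant $\gamma$ controls the error.
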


\begin{proof}
The error operator has the spectral representation:
\begin{equation}
P - P_N = \sum_{\substack{k,l=1 \\ k>N \text{ or } l>N}}^{\infty} \frac{\inner{\phi_k}{Q\phi_l}}{-(\lambda_k + \lambda_l)} \phi_k \otimes \phi_l
\end{equation}
Since $\lambda_k, \lambda_l \leq -\gamma < 0$, we have $|\lambda_k + \lambda_l| \geq 2\gamma$ for all terms. Using operator norm properties:
\begin{equation}
\|P - P_N\|_{\op} \leq \frac{1}{2\gamma}\left(\|(I-\Pi_N)Q(I-\Pi_N)\|_{\op} + \|\Pi_N Q(I-\Pi_N)\|_{\op} + \|(I-\Pi_N)Q\Pi_N\|_{\op}\right) \leq \frac{\|Q\|_{\op}}{2\gamma}
\end{equation}
\end{proof}

\begin{corollary}[Improved bounds for classical systems]
For classical polynomial systems where eigenvalues grow monotonically ($|\lambda_k|$ increases with $k$), tighter bounds are available:
\begin{equation}
\|P - P_N\|_{\op} \leq \frac{\|Q\|_{\op}}{2|\lambda_{N+1}|}
\end{equation}

For typical cases:
\begin{enumerate}
\item \textbf{Zernike (disk)}: $|\lambda_n^m| = \alpha^2 \mu_n^m + \gamma \sim \alpha^2 n^2 + \gamma$ → $\mathcal{O}(N^{-2})$ convergence
\item \textbf{Hermite (harmonic oscillator)}: $|\lambda_{\mathbf{n}}| = |\mathbf{n}| + d/2 + \gamma$ → $\mathcal{O}(N^{-1})$ convergence  
\item \textbf{Spherical harmonics}: $|\lambda_l| = \alpha^2 l(l+1) + \gamma \sim \alpha^2 l^2 + \gamma$ → $\mathcal{O}(N^{-2})$ convergence
\end{enumerate}
\end{corollary}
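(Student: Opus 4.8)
The plan is to start from the spectral representation of the error operator already exhibited in the proof of Theorem~\ref{thm:energy_error}, namely
\begin{equation}
P - P_N = \sum_{\substack{k,l \geq 1 \\ k > N \text{ or } l > N}} \frac{\inner{\phi_k}{Q\phi_l}}{|\lambda_k| + |\lambda_l|}\, \phi_k \otimes \phi_l,
\end{equation}
where self-adjointness and dissipativity give $-(\lambda_k + \lambda_l) = |\lambda_k| + |\lambda_l| > 0$. The goal is to show that the monotonicity hypothesis lets me replace the crude denominator bound $|\lambda_k| + |\lambda_l| \geq 2\gamma$ of Theorem~\ref{thm:energy_error} by the sharper $|\lambda_k| + |\lambda_l| \geq 2|\lambda_{N+1}|$, which is what produces the tighter constant.

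The key estimate is a denominator bound driven by monotonicity, and I would organize it through the three blocks of the error relative to $\Pi_N$,
\begin{equation}
P - P_N = \Pi_N^\perp P \Pi_N^\perp + \Pi_N P \Pi_N^\perp + \Pi_N^\perp P \Pi_N, \qquad \Pi_N^\perp := I - \Pi_N.
\end{equation}
In the tail block every index satisfies $k,l \geq N+1$, so monotonicity of $|\lambda_k|$ yields $|\lambda_k| + |\lambda_l| \geq 2|\lambda_{N+1}|$ directly, and since $Q$ is bounded this block obeys $\|\Pi_N^\perp P \Pi_N^\perp\|_{\op} \leq \|Q\|_{\op}/(2|\lambda_{N+1}|)$. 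The cleanest and physically dominant situation is diagonal noise (the uncorrelated-mode case of Proposition~\ref{prop:noise_structure}, and more generally any $Q$ commuting with $\Pi_N$): the two cross blocks then vanish, $P$ reduces to $\sum_k \frac{q_k}{2|\lambda_k|}\,\phi_k\otimes\phi_k$, and the stated bound follows exactly from $\|P-P_N\|_{\op} = \sup_{k>N} q_k/(2|\lambda_k|) \leq \|Q\|_{\op}/(2|\lambda_{N+1}|)$, using $q_k = \inner{\phi_k}{Q\phi_k} \leq \|Q\|_{\op}$ and $|\lambda_k| \geq |\lambda_{N+1}|$ for $k>N$.

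I expect the two cross blocks for genuinely correlated noise to be the main obstacle. There one index lies in $\{1,\dots,N\}$ and the other in $\{N+1,\dots\}$, so the best available estimate is $|\lambda_k| + |\lambda_l| \geq \gamma + |\lambda_{N+1}| \geq |\lambda_{N+1}|$, which loses the factor of two and gives only $\|Q\|_{\op}/|\lambda_{N+1}|$ for those terms. I would handle this by recording the sharp constant for the diagonal and rotationally structured cases (where the cross blocks vanish), and noting that for general $Q$ the identical argument yields the marginally weaker $\|P-P_N\|_{\op} \leq \|Q\|_{\op}/|\lambda_{N+1}|$; the decisive point is that the \emph{rate} is untouched, since it is governed solely by the growth of $|\lambda_{N+1}|$. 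To read off the displayed convergence orders I then substitute the spectral asymptotics into $|\lambda_{N+1}|$, identifying $N$ with the relevant quantum number: the quadratic growth $|\lambda_{N+1}| \sim \alpha^2 N^2$ for the disk and the sphere gives $\mathcal{O}(N^{-2})$, while the linear Hermite growth $|\lambda_{N+1}| \sim N$ gives $\mathcal{O}(N^{-1})$. Everything beyond the cross-block factor of two is a direct substitution into the spectral sum.
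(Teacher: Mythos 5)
Your route is the same one the paper implicitly intends: the corollary carries no proof of its own, and the natural argument is to rerun the proof of Theorem~\ref{thm:energy_error}, replacing the crude denominator bound $|\lambda_k|+|\lambda_l|\geq 2\gamma$ by one that uses monotonicity of the spectrum. Your block decomposition and the diagonal-noise computation are correct, and the substitution of the spectral asymptotics to read off the $\mathcal{O}(N^{-2})$ and $\mathcal{O}(N^{-1})$ rates matches the paper exactly. The genuinely valuable part of your write-up is the observation about the cross blocks $\Pi_N P\,\Pi_N^{\perp}$ and $\Pi_N^{\perp}P\,\Pi_N$: there one index stays below $N+1$, so monotonicity only yields $|\lambda_k|+|\lambda_l|\geq \gamma+|\lambda_{N+1}|$, and the advertised constant $1/(2|\lambda_{N+1}|)$ is \emph{not} obtained for general $Q$ by this argument. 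That is a real gap in the corollary as stated, not in your proof: the factor of two is justified only when the cross blocks vanish (diagonal $Q$, or more generally $Q$ commuting with $\Pi_N$, as in the structured cases of Proposition~\ref{prop:noise_structure}), while for general $Q$ one obtains the same decay rate with a constant worse by a bounded factor. Your resolution --- prove the sharp constant in the commuting case, record the weaker constant in general, and note that the convergence \emph{order} depends only on the growth of $|\lambda_{N+1}|$ --- is the correct way to salvage the statement. One small point you could tighten: for the tail block $\Pi_N^{\perp}P\,\Pi_N^{\perp}$ the bound $\|Q\|_{\op}/(2|\lambda_{N+1}|)$ is not just a termwise estimate on matrix entries; it follows cleanly from the Schur product theorem, since the multiplier $1/(|\lambda_k|+|\lambda_l|)=\int_0^{\infty}e^{-|\lambda_k|t}e^{-|\lambda_l|t}\,\dd t$ is a Gram (hence positive semidefinite) matrix with diagonal bounded by $1/(2|\lambda_{N+1}|)$ on the tail indices --- the same device the paper's own Theorem~\ref{thm:energy_error} proof uses only loosely.
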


\subsection{Physical interpretation of convergence}

\begin{corollary}[Physical convergence criterion]\label{cor:physical_convergence}
Rapid convergence of the finite-dimensional approximation occurs when:
\begin{enumerate}
\item \textbf{Strong dissipation}: Large dissipation constant $\gamma$ provides uniform energy decay
\item \textbf{Smooth noise}: Rapidly decaying noise correlations for high-order modes
\item \textbf{Spectral gaps}: Well-separated eigenvalues prevent near-degeneracy effects
\item \textbf{Classical polynomial growth}: When eigenvalues grow polynomially with mode number, as in all classical systems
\end{enumerate}
The uniform dissipation modification ensures these conditions are satisfied while preserving the essential mathematical structure of classical polynomial systems.
\end{corollary}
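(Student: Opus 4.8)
The plan is to read Corollary~\ref{cor:physical_convergence} as a qualitative synthesis of the quantitative error bounds already in hand, so that the proof amounts to tracing how each listed condition enters those bounds. I would begin from the spectral representation of the error operator established in the proof of Theorem~\ref{thm:energy_error},
\[
P - P_N = \sum_{\substack{k,l=1 \\ k > N \text{ or } l > N}}^{\infty} \frac{\inner{\phi_k}{Q\phi_l}}{-(\lambda_k + \lambda_l)}\, \phi_k \otimes \phi_l,
\]
and observe that every surviving summand is small precisely when its numerator is small or its denominator is large. Each of the four criteria is then shown to act on one of these two factors.

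Next I would dispatch the two conditions that follow immediately from the existing estimates. For strong dissipation, the uniform bound $\norm[\op]{P - P_N} \leq \norm[\op]{Q}/(2\gamma)$ of Theorem~\ref{thm:energy_error} rests on $\abs{\lambda_k} + \abs{\lambda_l} \geq 2\gamma$, so increasing $\gamma$ shrinks the entire tail uniformly in $N$. For classical polynomial growth, I would invoke the refined bound $\norm[\op]{P - P_N} \leq \norm[\op]{Q}/(2\abs{\lambda_{N+1}})$ from the preceding corollary: when $\abs{\lambda_k}$ grows polynomially (as $\sim k^2$ for the Zernike and spherical cases, $\sim k$ for Hermite), one has $\abs{\lambda_{N+1}} \to \infty$ and the error decays at the stated algebraic rates.

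The remaining two conditions require assigning quantitative content to informal hypotheses. For smooth noise I would split the error through the three cross-terms appearing in Theorem~\ref{thm:energy_error}, namely $\norm[\op]{(I - \Pi_N) Q (I - \Pi_N)}$, $\norm[\op]{\Pi_N Q (I - \Pi_N)}$, and $\norm[\op]{(I - \Pi_N) Q \Pi_N}$, and show that each is governed by the tail of the modal coefficients $\inner{\phi_k}{Q\phi_l}$. Interpreting \emph{smooth noise} as rapid decay of those coefficients for large $k,l$ then forces $\norm[\op]{(I - \Pi_N)Q} \to 0$, so the error vanishes faster than the uniform $\gamma$-bound alone would predict.

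The main obstacle, and the step I would treat most carefully, is the spectral-gap criterion. In the self-adjoint dissipative setting the denominators are always bounded below by $2\gamma$, so there is no genuine small-divisor or resonance phenomenon of the kind spectral gaps usually remedy. The honest statement is therefore not about avoiding near-cancellation but about the relative weight of retained versus discarded modes: a gap above the $N$th eigenvalue makes $\abs{\lambda_{N+1}} - \abs{\lambda_N}$ appreciable, so that truncating exactly at such a gap yields a sharp jump in the refined bound $\norm[\op]{Q}/(2\abs{\lambda_{N+1}})$. Making this precise, and separating it cleanly from the polynomial-growth criterion with which it partially overlaps, is the delicate part; the other three conditions reduce essentially to bounds already proved.
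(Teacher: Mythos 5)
The paper states this corollary without any proof at all---it is offered as an interpretive summary of Theorem~\ref{thm:energy_error} and the unlabeled corollary on improved bounds that precedes it---so there is no argument of the paper's to compare yours against line by line. Your reading is nevertheless the correct one, and your reconstruction supplies exactly the quantitative backing the paper leaves implicit: strong dissipation and classical polynomial growth do reduce directly to the bounds $\|Q\|_{\op}/(2\gamma)$ and $\|Q\|_{\op}/(2|\lambda_{N+1}|)$ as you trace them, and your treatment of smooth noise via the decay of $\|(I-\Pi_N)Q\|_{\op}$ is the natural way to make that condition precise (though note it strengthens the theorem's stated bound rather than following from it, since the paper's final inequality discards the projection structure). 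Your handling of the spectral-gap condition is the most valuable part of the proposal: you correctly observe that in the self-adjoint dissipative setting the denominators $-(\lambda_k+\lambda_l)$ are uniformly bounded below by $2\gamma$, so there is no small-divisor phenomenon for gaps to remedy, and the paper's claim that spectral gaps ``prevent near-degeneracy effects'' has no support in the preceding estimates. Your charitable reinterpretation---that truncating at a gap makes the refined bound $\|Q\|_{\op}/(2|\lambda_{N+1}|)$ drop sharply---is the only quantitative content that condition can be given here, and you are right that it then largely collapses into the polynomial-growth criterion. In short: your proposal is not merely adequate, it is more careful than the source, and the one condition you flag as delicate is genuinely the one the paper cannot justify from its own results.
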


\section{Deep Mathematical Connections and Physical Unity}\label{sec:unity}

\subsection{The fundamental correspondence}

Our energy dissipation framework reveals a profound correspondence between geometric and algebraic structures:

\begin{theorem}[Geometry-algebra correspondence]\label{thm:correspondence}
There exists a natural correspondence between:
\begin{enumerate}
\item Classical geometric domains with boundary conditions
\item Self-adjoint differential operators on those domains (modified with uniform dissipation)
\item Established orthogonal polynomial eigenfunctions
\item Finite-dimensional matrix Lyapunov equations
\end{enumerate}
mediated by the energy dissipation integral \eqref{eq:energy_integral}.
\end{theorem}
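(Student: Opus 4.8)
The plan is to read Theorem~\ref{thm:correspondence} as a synthesis statement and to prove it by exhibiting the four levels as the vertices of an explicit chain of canonical assignments $(1)\to(2)\to(3)\to(4)$, all threaded by the energy integral~\eqref{eq:energy_integral}. Before anything else I would pin down what ``natural correspondence'' means here: for each admissible datum at level~(1) --- a domain $\Omega$ equipped with self-adjoint (e.g.\ Dirichlet) boundary conditions together with a classical elliptic operator $\mathcal{D}$ whose eigenfunctions are orthogonal polynomials --- the construction should return a canonical datum at each of levels~(2)--(4), the only freedom being the (inessential) enumeration of the eigenbasis. With that reading fixed, the theorem becomes the assertion that four already-established maps compose coherently.

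The first link, $(1)\to(2)$, is supplied by Theorem~\ref{thm:universal} in general and by Theorem~\ref{thm:disk_operator} in the representative disk case: the shifted operator $\mathcal{L}=-\mathcal{D}-\gamma I$ is self-adjoint and dissipative, hence satisfies Assumption~\ref{ass:selfadjoint}. The link $(2)\to(3)$ is the spectral-shift observation that adding the scalar $-\gamma I$ leaves eigenfunctions untouched while sending each eigenvalue $\mu_k\ge 0$ to $\lambda_k=-\mu_k-\gamma<0$; I would cite Theorem~\ref{thm:zernike_eigen} and its analogues in Examples~\ref{ex:hermite}--\ref{ex:spherical} to certify that the eigenfunctions are exactly the classical polynomials. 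The link $(3)\to(4)$ is the passage, via the energy integral and Definition~\ref{def:finite_projection}, to the finite section $P_N=\Pi_N P\Pi_N$, which by Theorem~\ref{thm:matrix_energy} solves the matrix Lyapunov equation~\eqref{eq:matrix_lyapunov}. The mediating role of~\eqref{eq:energy_integral} is then explicit: the operator $P$ it defines satisfies the energy balance $\mathcal{L}P+P\mathcal{L}=-Q$ of Theorem~\ref{thm:covariance_properties}, and the matrix equation is its finite-dimensional shadow.

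The one point that carries genuine content --- and the step I expect to be the main obstacle --- is naturality, i.e.\ that the assignment is compatible with projection rather than merely defined level by level. Concretely, I would verify the commuting square ``project then form the equation'' $=$ ``form the equation then project.'' Because we work in the eigenbasis of $\mathcal{L}$, the spectral projection $\Pi_N$ leaves its range $\mathcal{L}$-invariant (within any degenerate eigenspace $\mathcal{L}$ acts as a scalar, so even a partial cut stays invariant), whence $\mathcal{L}$ and $\Pi_N$ commute and $\Pi_N\mathcal{L}=\Pi_N\mathcal{L}\Pi_N=\Lambda\Pi_N$; applying $\Pi_N(\cdot)\Pi_N$ to $\mathcal{L}P+P\mathcal{L}=-Q$ then collapses the cross terms and yields exactly $\Lambda P_N+P_N\Lambda=-Q_N$, so the two routes agree. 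This commutation is special to the eigenbasis: for a generic basis the finite section of the operator balance would not coincide with the Lyapunov equation, so the care needed is to state the correspondence with the eigenbasis built in and to confirm independence from the residual labeling freedom inside degenerate blocks. I would close by remarking that the correspondence is a bijection in the direction that matters --- the domain-plus-operator data determine $\{\phi_k\}$, $\Lambda$, and (given $Q$) the whole family $\{P_N\}$, while conversely $\Lambda$ together with the polynomial eigenfunctions reconstructs $\mathcal{L}$ and hence the geometric datum up to the obvious equivalences --- which is the precise sense in which~\eqref{eq:energy_integral} mediates all four levels at once.
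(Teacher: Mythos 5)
Your proposal is correct and matches the paper's intent: the paper offers no formal proof of Theorem~\ref{thm:correspondence}, treating it as a summary of Theorems~\ref{thm:universal}, \ref{thm:zernike_eigen}, \ref{thm:matrix_energy}, and \ref{thm:covariance_properties}, and your chain $(1)\to(2)\to(3)\to(4)$ assembles exactly those results. Your added naturality check --- that $\Pi_N$ commutes with $\mathcal{L}$ in the eigenbasis so that projecting $\mathcal{L}P+P\mathcal{L}=-Q$ reproduces $\Lambda P_N+P_N\Lambda=-Q_N$ --- is a correct and worthwhile strengthening that the paper only establishes implicitly via the explicit formula $(P_N)_{jk}=(Q_N)_{jk}/(-(\lambda_j+\lambda_k))$ in Theorem~\ref{thm:matrix_energy}.
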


This correspondence unifies established mathematical structures under a single physical principle while preserving their classical properties.

\subsection{Physical principles underlying mathematical unity}

\begin{principle}[Energy conservation and dissipation]\label{prin:energy}
All the mathematical structures (classical orthogonal polynomials, matrix equations, spectral theory) emerge from the fundamental physical requirement that energy must be conserved in closed systems and dissipated in open systems according to the second law of thermodynamics.
\end{principle}

\begin{principle}[Structural preservation under dissipation]\label{prin:preservation}
Adding uniform dissipation to classical differential operators preserves their eigenfunction structure while ensuring the energy balance conditions required for physical consistency. This allows direct connection to established polynomial systems.
\end{principle}

\begin{principle}[Spatial symmetry and correlations]\label{prin:symmetry}
The specific form of matrix equations (diagonal, block-diagonal, or full) reflects the spatial symmetries of energy injection processes, providing a direct physical interpretation of mathematical structure in classical polynomial bases.
\end{principle}

\begin{principle}[Scale separation and finite-dimensional reduction]\label{prin:scale}
The emergence of finite-dimensional matrix equations from infinite-dimensional energy integrals reflects the physical principle that dominant energy modes determine the essential system dynamics, naturally connecting to the truncation methods used in applications of classical polynomial systems.
\end{principle}

\section{Conclusions and Broader Implications}\label{sec:conclusions}

We have established a unified theoretical framework that reveals the deep connections between classical orthogonal polynomial systems and matrix Lyapunov equations through the fundamental physics of energy dissipation. The key insights are:

\textbf{Physical foundation with classical structure preservation}: Mathematical structures emerge naturally from energy balance principles while maintaining direct connections to established polynomial systems (Zernike, Hermite, spherical harmonics). The key innovation is recognizing that uniform dissipation preserves classical eigenfunction structure while ensuring physical consistency.

\textbf{Universal applicability to established systems}: The energy dissipation integral \eqref{eq:energy_integral} provides a master framework that encompasses classical polynomial systems through physically motivated operator modifications that preserve their essential mathematical properties.

\textbf{Geometric-algebraic unity}: The correspondence between spatial domains, classical differential operators (with dissipation), established orthogonal polynomials, and matrix equations reveals a fundamental unity that connects rather than replaces traditional mathematical structures.

\textbf{Physical interpretation of classical mathematical structure}: The form of matrix Lyapunov equations directly reflects the spatial symmetries of underlying physical processes, providing new insight into why classical polynomial systems arise so naturally in applications.

This framework opens new avenues for theoretical research while maintaining strong connections to established mathematical literature. The energy dissipation perspective suggests that many other classical mathematical structures in physics may be connected through similar principles.

\textbf{Scope and applications}: Our theory applies to self-adjoint operators (with uniform dissipation) having discrete spectrum. This covers important physical systems including:
\begin{itemize}
\item Heat and mass diffusion processes with energy losses
\item Quantum systems with Hermitian Hamiltonians and decoherence  
\item Linear elastodynamics with material damping
\item Optical systems on bounded domains with absorption
\end{itemize}

All connections are made to classical, well-established polynomial systems, ensuring compatibility with the extensive existing literature in optics, quantum mechanics, and engineering applications.

\textbf{Broader implications}: The unification achieved here demonstrates that classical mathematical structures arising in different physical contexts reflect universal principles of energy, symmetry, and geometry. By grounding mathematical developments in fundamental physical principles while preserving established structures, we gain both deeper theoretical understanding and more robust analytical tools.

The energy dissipation framework thus represents both a technical advance and a conceptual bridge between the geometric, analytic, and algebraic aspects of mathematical physics, revealing elegant underlying simplicities in apparently complex infinite-dimensional systems while maintaining direct connections to the classical polynomial systems that form the foundation of modern applied mathematics.

\section*{Declarations}
All data-related information and coding scripts discussed in the results section are available from the 
corresponding author upon request.

\section*{Disclosures}
The authors declare no conflicts of interest.

\end{document}